\DeclareMathOperator{\VEC}{Vec}
\DeclareMathOperator{\tr}{tr}
\DeclareMathOperator{\diag}{diag}
\DeclareMathOperator{\cov}{Cov}
\DeclareMathOperator{\E}{E}
\DeclareMathOperator{\range}{span}
\newcommand{\Krylov}{\mathcal{K}}
\newcommand{\Real}{\mathbb{R}}
\newcommand{\pmx}[1]{\begin{pmatrix}#1\end{pmatrix}}
\newcommand{\mtx}[1]{\begin{matrix}#1\end{matrix}}
\newcommand{\eps}{\varepsilon}
\newcommand{\ones}{\mathbf{1}}
\newtheorem{theorem}{Theorem}
\newtheorem{property}{Property}
\newtheorem{lemma}{Lemma}
\newtheorem{corollary}{Corollary}
\newtheorem{remark}{Remark}
\newcolumntype{H}{>{\setbox0=\hbox\bgroup}c<{\egroup}@{}}
\begin{document}
\title[PCG methods for GLM estimation]{Preconditioned Conjugate Gradient methods for the estimation of
  General Linear Models.}  
\author{Paolo Foschi}%
\begin{abstract}

  The use of the Preconditioned Conjugate Gradient (PCG) method for
  computing the Generalized Least Squares (GLS) estimator of the
  General Linear Model (GLM) is considered. The GLS estimator is
  expressed in terms of the solution of an augmented system.  That
  system is solved by means of the PCG method using an indefinite
  preconditioner.  The resulting method iterates a sequence Ordinary
  Least Squares (OLS) estimations that converges, in exact precision,
  to the GLS estimator within a finite number of steps.
  The numerical and statistical properties of the estimator computed
  at an intermediate step are analytically and numerically studied.

  This approach allows to combine direct methods, used in the OLS
  step, with those of iterative methods. This advantage is exploited
  to design PCG methods for the estimation of Constrained GLMs and of
  some structured multivariate GLMs.  The structure of the matrices
  involved are exploited as much as possible, in the OLS step.  The
  iterative method then solves for the unexploited structure.
  Numerical experiments shows that the proposed methods can achieve,
  for these structured problems, the same precision of state of the
  art direct methods, but in a fraction of the time.

\end{abstract}

\maketitle

\section{Introduction}

The general linear model (GLM) is given by
\begin{align}\label{eq:glm}
  \bm{y} &= \bm{X}\bm{\beta} + \bm{\eps},
  &
  \bm\eps &\sim (0, \bm\Sigma)
\end{align}
where %
$\bm{y} \in \Real^{m}$ is the response vector, %
$\bm{X} \in \Real^{m \times n}$ is the regressor matrix %
$\bm{\beta} \in\Real^n$ is the vector of parameters to be estimated %
and %
the disturbance term $\bm{\eps} \in \Real^{m}$ has zero mean and
variance-covariance matrix $\bm{\Sigma}$. %
Throughout the paper it will be assumed that the regressor matrix $\bm{X}$
has full-column rank.
The Ordinary Least Squares (OLS) and the Generalized Least Squares
(GLS) estimators are, respectively, defined as
\begin{align}\label{eq:OLS}
  \bm{b}_{OLS} &= (\bm{X}^T\bm{X})^{-1} \bm{X}^T\bm{y}
  \intertext{and}
  \label{eq:GLS}
  \bm{b}_{GLS} &= (\bm{X}^T \bm\Sigma^{-1} \bm{X})^{-1} \bm{X}^T \bm\Sigma^{-1} \bm{y}.
\end{align}
Both the OLS and GLS estimators are linear and unbiased.  The latter
provides the Best Linear Unbiased Estimator (BLUE) when the covariance
matrix $\bm\Sigma$ is non-singular.
This limits its applicability as singular covariance matrices may
arise in several context such as multivariate analysis, econometrics
and psychometrics
\cite{Srivastava:laa2002,Tian:jspi2009,ricos:jce97,Takada:1995,Yuan:csda2008}.

Often, computing the OLS estimator is much faster than computing the
GLS estimator. This happens, for instance, for the Seemingly Unrelated
Regressions (SUR) model, which is a GLM where the response vector, the
data matrix and the covariance matrices have, respectively, the
following structure
\begin{align*}
  \bm{y} &= \pmx{\bm{y}_1 \\ \bm{y}_2 \\ \vdots \\ \bm{y}_G },
  &
  \bm{X} &= \pmx{
    \bm{X}_1 & \bm{0} & \cdots & \bm{0} \\
    \bm{0} & \bm{X}_2 & \cdots & \bm{0} \\
    \vdots & \vdots & \ddots & \vdots \\
    \bm{0} & \bm{0} & \cdots & \bm{X}_G \\
  },
\end{align*}
and
\begin{align*}
  \bm\Sigma &= \pmx{
    \omega_{11} \bm{I}_M & \omega_{12} \bm{I}_M & \cdots & \omega_{1G}\bm{I}_M \\
    \omega_{21} \bm{I}_M & \omega_{22} \bm{I}_M & \cdots & \omega_{2G}\bm{I}_M \\
    \vdots & \vdots & & \vdots \\
    \omega_{G1} \bm{I}_M & \omega_{G2} \bm{I}_M & \cdots & \omega_{GG}\bm{I}_M 
  }. 
\end{align*}
Here, the regressor matrices $\bm{X}_i \in \Real^{M \times n_i}$,
$i=1,\ldots,G$, have full column rank, the covariance matrix
$\bm\Omega = [\omega_{ij}]_{ij} \in \Real^{G \times G}$ is symmetric
and positive semi-definite and $\bm{I}_n$ denotes the $n \times n$ identity
matrix.

Because of the block diagonal structure of $\bm{X}$, the OLS
estimation consists on collecting the OLS estimator of each block,
that is
$\bm{b}_{OLS}^T = (\bm{b}_{OLS,1}^T\; \bm{b}_{OLS,1}^T\; \cdots \; \bm{b}_{OLS,1}^T)$ 
with
$\bm{b}_{OLS,i} = (\bm{X}_i^T\bm{X}_i)^{-1}\bm{X}_i^T \bm{y}_i$. 
Clearly, the computational cost of that procedure is linear on the
number of blocks $G$.

It is not the same for GLS estimation. Although the inversion of
$\bm\Sigma$ can be efficiently obtained by inverting $\bm\Omega$,
inverting or factorising the matrix $\bm{X}^T \bm\Sigma^{-1}\bm{X}$ is
a computational expensive operation whose computational complexity is
$O\big((\sum_i n_i)^3\big)$. In that case, indeed, this matrix does
not have neither the block diagonal structure of $\bm{X}$ nor the
sparse structure of $\bm\Sigma$. Direct methods that exploit the
structure in this kind of models have been proposed and studied in
\cite{FoschiBelsleyKontoghiorghes:03,%
  FoschiKontoghiorghes:03,Foschi:csda02,%
  FoschiKontoghiorghes:03b,Foschi:laa02,%
  ricos:jce00b,ricos:aice_monog,%
  ricos:jce00a,ricos:jcsda:95}.

A similar situtation arises in the estimation of the constrained
multivariate linear model
\begin{align*}
  \bm{Y} &= \bm{X}_0 \bm{B} + \bm{U},
  &
  b_{ij}&=0, \text{ for } (i,j) \in \mathcal{C},
\end{align*}
where %
$\bm{Y},\bm{U} \in \Real^{M \times N}$ are the response and
disturbance matrices, $\bm{X}_0 \in \Real^{M \times N}$ is a fixed
data matrix, $\bm{B} \in \Real^{N \times G}$ is matrix of regression
parameters to be determined having some elements constrained to 0 and
$\mathcal{C}$ is the set the indices of the constrained elements. The
disturbances matrix $\bm{U}$ have zero mean, independent and
identically distributed (iid) rows and the covariance matrix of any row is
$\bm\Omega = [\omega_{ij}]_{ij}$. More precisely, $\E[u_{ij}] = 0$ for
all $i,j$, $\E[u_{ij}u_{pq}] = 0$ if $i \neq j$ and
$\E[u_{ij}u_{ik}] = \omega_{jk}$.

If all the constraints are relaxed then the GLS and OLS estimators are
equivalent and given by
$\bm{B}_{OLS} = (\bm{X}_0^T\bm{X}_0)^{-1}\bm{X}_0^T \bm{Y}$. The cost
of that operation $O(G N^3)$ which is linear in $G$. Instead, the
original model is equivalent to the previously considered SUR model
with the regressor block $\bm{X}_i$ obtained from $\bm{X}_0$ by deleting
the columns corresponding to constrained elements of $\bm{B}$
\cite{MagnusNeudecker:book,Srivastava:book}.
In this case, instead of changing the non-spherical distribution of
the disturbances to a spherical one, the operation that led to a
faster estimation is the relaxation of a set of constraints.

\bigskip

The aim of this work is to propose numerical algorithms, based on the
preconditioned conjugate (PCG) method, for structured linear models.
The methods here presented take advantage of the fact that changing or
relaxing some model's assumptions allows for a very fast
estimation. Here, the GLS estimator is reformulated as the solution of
an augmented system which, in turn, is solved by means of a PCG method
using an indefinite preconditioner \cite{LuksanVlcek:98}.  The
resulting method will be called PCG-Aug.
Although this method is already well known in the numerical linear
algebra community, it has not been considered in the context of
statistical estimation \cite{AltmanGondzio:99,BenziGolubLiesen:05,%
  BonettiniRuggieroTinti:07,LuksanVlcek:98,%
  RozloznikSimoncini:02}. %
The scope of the present paper is to fill this gap by deriving the
statistical properties of the resulting parameter's estimator and to
use this method to exploit the specific structure of some classes of
linear statistical models.

\bigskip

The rest of the paper is structured as follows.  Section \ref{sec:PCG}
reviews the PCG method and some of its properties. Next, in Section
\ref{sec:augPcg}, the GLS estimator is reformulated as the solution to
an augmented system. That formulation is more general than
\eqref{eq:GLS} since, under appropriate conditions, delivers a BLUE
even when the covariance matrix is singular. Then, the indefinite
preconditioner for the augmented system is reviewed and the resulting
PCG-Aug method is studied. There, in addition to some results already
discussed in \cite{AltmanGondzio:99,BenziGolubLiesen:05,%
  BonettiniRuggieroTinti:07,LuksanVlcek:98,%
  RozloznikSimoncini:02} specific issues concerning GLM estimation are
considered. In Section \ref{sec:conv} is discussed how rescaling the
covariance matrix affects the convergence of the method.  Inferential
properties of the iterates are examined both theoretically and
experimentally in Section \ref{sec:glmpcgUnbiased}.
Then, in Section \ref{sec:apps}, the PCG-Aug method is adapted to some
structured GLMs. The following models are considered: the GLM
with linear restrictions on the parameters, the restricted
multivariate GLM and the SUR model.  The performances of the proposed
methods are tested on a macro-econometric model and on Vector
AutoRegressive (VAR) models with parameter restrictions. Finally, in
the last section, conclusions and future research directions are
given.

\subsection{Notation}
The $m\times n$ matrices having all zero and all one elements are
denoted by $\bm{0}_{m \times n}$ and $\ones_{m \times n}$,
respectively. Analogously, $\bm{0}_{n}$ and $\ones_n$ denote,
respectively, the $n \times 1$ vector of all zero and all ones.  The
$n \times n$ identity matrix is denoted by $\bm{I}_n$.  Often, the
indices will be omitted if the dimension can be deduced from the
context. When dealing with multivariate GLMs, for notational
convenience, the $\VEC$ operator, direct sums and Kronecker products
of matrices will be used
\cite{ricos:aice_monog,MagnusNeudecker:book}. The $\VEC$ operator is
the operator that stacks the columns of its argument one under the
other, that is for
$\bm{A} = \pmx{\bm{a}_1 & \bm{a}_2 & \cdots & \bm{a}_n}$,
$\VEC(\bm{A}) = \pmx{\bm{a}_1^T & \bm{a}_2^T & \cdots &
  \bm{a}_n^T}{}^T$.
The Kronecker product of the matrices $\bm{A} \in \Real^{m \times n}$
and $\bm{B} \in \Real^{p \times q}$ and the direct sum of the matrices
$\bm{C}_1, \ldots, \bm{C}_G$ are, respectively, defined as
\begin{align*}
  \bm{A} \otimes \bm{B} &= \pmx{ 
    a_{11}\bm{B} & a_{12}\bm{B} & \cdots & a_{1n}\bm{B} \\
    a_{21}\bm{B} & a_{22}\bm{B} & \cdots & a_{2n}\bm{B} \\
    \vdots & \vdots & & \vdots \\
    a_{m1}\bm{B} & a_{m2}\bm{B} & \cdots & a_{mn}\bm{B} 
  }
  &&\text{and}&
  \oplus_i \bm{C}_i &= 
  \pmx{ 
    \bm{C}_1 & \bm{0} & \cdots & \bm{0} \\
    \bm{0} & \bm{C}_2 & \cdots & \bm{0} \\
    \vdots & & \ddots & \vdots \\
    \bm{0} & \bm{0} & \cdots & \bm{C}_G
  }.
\end{align*}

\section{The Preconditioned Conjugate Gradient Method}
\label{sec:PCG}

In order to fix the notation and to recall some known results, the PCG
method is reviewed. The results here reported are standard and can be
found in several monographs \cite{%
  Broyden:book,Golub:book,Greenbaum:book,
  HagemanYoung:book,LiesenStrakos:book,Saad:book,%
  VanDerVorst:book}. %
Here, the approach, terminology and notation of \cite{Broyden:book}
are followed.

The PCG method for solving the $N\times N$ symmetric linear system
$\bm{Gx} = \bm{h}$ is reported in Algorithm \ref{alg:pcg}. There,
$\bm{K} \in \Real^{N \times N}$ is an auxiliary or preconditioning
symmetric matrix, $\bm{x}_i$ is the $i$-th approximation to the
solution $\bm{x}$ and $\bm{f}_i = \bm{Gx}_i - \bm{h}$ is the
corresponding residual. Hereafter, $\bm{G}$ and $\bm{K}$ are assumed
symmetric, but not necessarily positive definite.
\begin{algorithm}[h]
  \small
  \caption{The PCG method}\label{alg:pcg}
  \begin{algorithmic}[1]
    \STATE Given $\bm{x}_1$ arbitrary
    \STATE $\bm{f}_1 =\bm{Gx}_1 -\bm{h}$, 
    $\bm{p}_1 = \bm{K} \bm{f}_1$, 
    $c_1 = \bm{f}_1^T \bm{K} \bm{f}_1$
    \FOR{$i = 1, 2, \ldots$}
    \STATE $d_i = \bm{p}_i^T \bm{G} \bm{p}_i$
    \STATE $\lambda_i = c_i / d_i$
    \STATE $\bm{x}_{i+1} = \bm{x}_i - \lambda_i \bm{p}_i $
    \STATE $\bm{f}_{i+1} = \bm{f}_i - \lambda_i \bm{Gp}_i$
    \STATE $c_{i+1} = \bm{f}_{i+1}^T \bm{Kf}_{i+1} $
    \STATE $\mu_i = c_{i+1}/c_i$
    \STATE $\bm{p}_{i+1} = \bm{Kf}_{i+1} + \mu_i \bm{p}_i$
    \ENDFOR
  \end{algorithmic}
\end{algorithm}
The following properties resume key relations among the iterates of
the PCG method under the assumption of exact precision computations.

The first property places the PCG method into the class of Kyrlov
methods and will be used in the following to characterize $\bm{p}_i$
and $\bm{f}_i$ in the context of the GLM estimation.
\begin{property}\label{t:krylov}
  Let
  \begin{align*}
    \bm{V}_i := \pmx{
      \bm{f}_1 & \bm{GK}\bm{f}_1 & (\bm{GK})^2\bm{f}_1 & \cdots & (\bm{GK})^i \bm{f}_1
    }    
  \end{align*}
  be the Krylov matrix of order $i$ generated by $\bm{GK}$ and
  $\bm{f}_1$.  The residuals and directions vectors belong,
  respectively, to the rank of $\bm{V}_i$ and of $\bm{KV}_i$, that is
  $\bm{f}_i = \bm{V}_i \bm{\gamma}$ and
  $\bm{p}_i \in \bm{KV}_i \bm\theta$, for some
  $\bm\gamma, \bm\theta \in \Real^{i+1}$.
\end{property}

The next property is an orthogonality property that the PCG's 
direction and residual vectors satisfy by construction.

\begin{property}\label{t:orth}
  The following orthogonality and $\bm{G}$-conjugacy properties hold
  \begin{align}\label{eq:orth}
    \bm{p}_j^T \bm{f}_i = 0
    \qquad\text{and}\qquad
    \bm{p}_j^T \bm{Gp}_i = 0,
    \qquad\text{for}\quad
    j<i.
  \end{align}
\end{property}
When $\bm{G}$ is positive definite, from Properties \ref{t:krylov} and
\ref{t:orth} an error minimization property follows.
\begin{property}\label{t:optim}
  Let $\bm{x}$ be a solution to $\bm{Gx} = \bm{h}$ and let $\bm{G}$ be non-negative
  definite. Then $\bm{x}_{i+1}$ minimizes the error norm
  \begin{align*}
    \varphi(\bm\xi) = \frac12 (\bm\xi-\bm{x})^T \bm{G} (\bm\xi-\bm{x}),
  \end{align*}
  on 
  $\{ \bm\xi \,|\, \bm\xi = \bm{x}_1 + \bm{KV}_i\bm\gamma,\, \bm\gamma \in \Real^i\}$
  and
  $\varphi(\bm{x}_{i+1}) \leq \varphi(\bm{x}_i)$.
\end{property}

The main consequence of Property \ref{t:orth}, is that if the method
does not breakdown ($d_i\neq 0$) or stagnate ($\bm{p}_{i+1}=\bm{p}_i$
or $\bm{f}_{i+1} = \bm{f}_{i}$) the exact solution is computed in at
most $N$ steps. To be more precise the actual number of iterations
depends on the spectrum of $\bm{GK}$:

\begin{property}\label{t:finite}
  In absence of breakdowns and stagnations, the number of steps to
  compute the exact solution is equal to the number of distinct
  eigenvalues of $\bm{GK}$.  
\end{property}

A sufficient condition for absence of breakdowns is the positive
definitiveness of both $\bm{G}$ and $\bm{K}$. The positive
definitiveness of $\bm{G}$ is problem specific and, often, it cannot
be imposed, so in order to avoid unnecessary breakdowns one would
choose a positive definite $\bm{K}$. On the other side, by property
\ref{t:finite} a computationally efficient preconditioner should
reduce the number of distinct eigenvalues of $\bm{GK}$. As pointed out
in several papers, an indefinite preconditioner similar to the one
presented in the following Section addresses that issue
\cite{AltmanGondzio:99,BenziGolubLiesen:05,BonettiniRuggieroTinti:07,LuksanVlcek:98,RozloznikSimoncini:02}.

\section{%
  The augmented system estimator and the indefinite PCG method for the
  GLM
}
\label{sec:augPcg}

\subsection{Augmented System formulation}

The GLS estimator can be computed from the solution to the augmented
system
\begin{align}\label{eq:aug1}
  \pmx{ \bm\Sigma & \bm{X} \\ \bm{X}^T & \bm{0} }
  \pmx{ \bm{w} \\ \bm{b}_{Aug} }
  =
  \pmx{ \bm{y} \\ \bm{0} },
\end{align}
where $\bm\Sigma \bm{w}$ corresponds to the residual vector of the GLM
\eqref{eq:glm}.  When $\bm\Sigma$ is positive definite \eqref{eq:aug1}
is equivalent to \eqref{eq:GLS}. However, the augmented system
formulation is more general as it does not necessarily requires a non
singular covariance matrix \cite{Kourouklis:81,Rao:book73}.
As shown in the following Lemma \ref{thm:AugBLUE}, for obtaining a
BLUE it suffices to assume that $\bm\Sigma$ is postive definite on the
null space of $\bm{X}$. More precisely,
\begin{align}\label{eq:SigmaPosXNull}
  \bm{X}^T \bm{v} &= \bm{0}, \; \bm{v} \neq \bm{0},
  &&\Rightarrow&
  \bm{v}^T\bm\Sigma \bm{v} &> 0, 
\end{align}
for any $\bm{v} \in \Real^m$.

The results presented in the following are based on the QR
decomposition of the regressor matrix $\bm{X}$, which is given by
\begin{align*}
  \bm{Q}^T \bm{X} &= \pmx{ \bm{R} \\ \bm{0} }\mtx{ n\hfill \\ m-n},
  &
  \bm{Q} &= \bordermatrix{ & n & m-n \cr &  \bm{Q}_R & \bm{Q}_N },
\end{align*}
where $\bm{Q} \in \Real^{m\times m}$ is orthogonal, that is
$\bm{Q}^T\bm{Q}=\bm{I}$ and $\bm{R} \in \Real^{n\times n}$ is
non-singular. In particular, the columns of $\bm{Q}_R$ and $\bm{Q}_N$
form orthogonal bases for the space spanned by the regressor
observations in $\bm{X}$ and its orthogonal complement, that is the rank
and the null space of $\bm{X}$.

\begin{lemma}\label{thm:AugBLUE}
  The augmented system \eqref{eq:aug1} is non-singular when
  $\bm{Q}_N^T \bm\Sigma \bm{Q}_N$ is non-singular and in that case its
  solution is given by
  \begin{subequations}
    \label{eq:augsol}
    \begin{align}
      \bm{w} &= \bm{Q}_N (\bm{Q}_N^T \bm\Sigma \bm{Q}_N)^{-1} \bm{Q}_N^T \bm{y}
      \intertext{and}
      \bm{b}_{Aug} &= \bm{R}^{-1} \bm{Q}_R^T \bm{P}_N \bm{y},
    \end{align}
  \end{subequations}
  where $\bm{P}_N = \bm{I} - \bm\Sigma \bm{Q}_N (\bm{Q}_N^T\bm\Sigma \bm{Q}_N)^{-1} \bm{Q}_N^T$.
  Moreover, $\bm{b}_{Aug}$ is a BLUE for $\bm\beta$, the vector of parameters
  of the GLM \eqref{eq:glm} \cite{Kourouklis:81}.
  
  \proof
  See Appendix \ref{sec:proofs}
\end{lemma}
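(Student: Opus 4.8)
The plan is to diagonalise the augmented system with the orthogonal factor $\bm{Q}$ of the QR decomposition, reducing it to a small, explicitly solvable system, and then to verify the Gauss--Markov optimality of the resulting estimator directly.

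First I would left-multiply the first block row of \eqref{eq:aug1} by $\bm{Q}^T$ and expand the residual in the orthogonal basis as $\bm{w} = \bm{Q}_R \bm{z}_R + \bm{Q}_N \bm{z}_N$. Using $\bm{Q}_R^T \bm{X} = \bm{R}$ and $\bm{Q}_N^T \bm{X} = \bm{0}$, the constraint $\bm{X}^T \bm{w} = \bm{0}$ collapses to $\bm{R}^T \bm{z}_R = \bm{0}$, hence $\bm{z}_R = \bm{0}$ and $\bm{w} = \bm{Q}_N \bm{z}_N$. Projecting the transformed first equation onto $\bm{Q}_N$ then yields $\bm{Q}_N^T \bm\Sigma \bm{Q}_N \bm{z}_N = \bm{Q}_N^T \bm{y}$, uniquely solvable precisely when $\bm{Q}_N^T \bm\Sigma \bm{Q}_N$ is non-singular, which gives the stated formula for $\bm{w}$. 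Projecting onto $\bm{Q}_R$ gives $\bm{R} \bm{b}_{Aug} = \bm{Q}_R^T(\bm{y} - \bm\Sigma \bm{Q}_N \bm{z}_N) = \bm{Q}_R^T \bm{P}_N \bm{y}$, the stated formula for $\bm{b}_{Aug}$. Since the homogeneous version of this reduction forces $\bm{z}_R = \bm{0}$, then $\bm{z}_N = \bm{0}$, and then $\bm{b}_{Aug} = \bm{0}$, the (square) augmented matrix has trivial kernel and is therefore non-singular.

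For the BLUE claim I write $\bm{b}_{Aug} = \bm{L}_{Aug} \bm{y}$ with $\bm{L}_{Aug} = \bm{R}^{-1} \bm{Q}_R^T \bm{P}_N$; linearity is immediate. Unbiasedness reduces to $\bm{L}_{Aug} \bm{X} = \bm{I}$, which follows from $\bm{P}_N \bm{X} = \bm{X}$ (a consequence of $\bm{Q}_N^T \bm{X} = \bm{0}$) together with $\bm{Q}_R^T \bm{X} = \bm{R}$. For minimum variance I would take an arbitrary linear unbiased estimator $\bm{L}\bm{y}$, so $\bm{L}\bm{X} = \bm{I}$, and set $\bm{D} = \bm{L} - \bm{L}_{Aug}$, which satisfies $\bm{D}\bm{X} = \bm{0}$; since $\bm{X} = \bm{Q}_R\bm{R}$ this forces $\bm{D}\bm{Q}_R = \bm{0}$ and hence $\bm{D} = \bm{D}\bm{Q}_N \bm{Q}_N^T$. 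Expanding $\cov(\bm{L}\bm{y}) = (\bm{L}_{Aug}+\bm{D})\bm\Sigma(\bm{L}_{Aug}+\bm{D})^T$, the cross terms vanish because $\bm{P}_N \bm\Sigma \bm{Q}_N = \bm{0}$, so that $\bm{L}_{Aug}\bm\Sigma\bm{D}^T = \bm{R}^{-1}\bm{Q}_R^T\bm{P}_N\bm\Sigma\bm{Q}_N(\bm{D}\bm{Q}_N)^T = \bm{0}$. What remains is
\begin{align*}
\cov(\bm{L}\bm{y}) = \cov(\bm{b}_{Aug}) + (\bm{D}\bm{Q}_N)(\bm{Q}_N^T\bm\Sigma\bm{Q}_N)(\bm{D}\bm{Q}_N)^T,
\end{align*}
and the added term is positive semi-definite because condition \eqref{eq:SigmaPosXNull} is exactly the statement that $\bm{Q}_N^T\bm\Sigma\bm{Q}_N$ is positive definite. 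Hence $\bm{b}_{Aug}$ has the smallest covariance, in the positive semi-definite ordering, among all linear unbiased estimators.

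I expect the main obstacle to be the optimality step, and within it the verification that the cross-covariance term vanishes: this rests on the key identity $\bm{P}_N\bm\Sigma\bm{Q}_N = \bm{0}$ and on translating the unbiasedness constraint $\bm{D}\bm{X}=\bm{0}$ into the range condition $\bm{D} = \bm{D}\bm{Q}_N\bm{Q}_N^T$. Everything else is routine block manipulation, and the equivalence between \eqref{eq:SigmaPosXNull} and positive definiteness of $\bm{Q}_N^T\bm\Sigma\bm{Q}_N$ (using that the columns of $\bm{Q}_N$ span the null space of $\bm{X}^T$) is the only place where the sign assumption on $\bm\Sigma$ enters.
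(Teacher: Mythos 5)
Your proposal is correct and follows essentially the same route as the paper's proof: rotating the augmented system into the $(\bm{Q}_R,\bm{Q}_N)$ basis to read off $\bm{w}$ and $\bm{b}_{Aug}$, then establishing optimality by parametrizing an arbitrary linear unbiased estimator and killing the cross term via the identity $\bm{P}_N\bm\Sigma\bm{Q}_N = \bm{0}$. Your $\bm{D} = \bm{L} - \bm{L}_{Aug}$ with $\bm{D}\bm{Q}_R = \bm{0}$ is just a reparametrization of the paper's $\bm{A} = \bm{Q}_R\bm{R}^{-T} + \bm{Q}_N\bm{A}_N$, so the two arguments coincide.
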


\subsection{The PCG-Aug method}
\label{sec:PCG-Aug}

The PCG method presented in Section \ref{sec:PCG} is now applied to
the computation of the solution to the augmented system
\eqref{eq:aug1} 
\begin{align}\label{eq:Gaug}
  \bm{G} &=   \pmx{ \bm\Sigma & \bm{X} \\ \bm{X}^T & \bm{0}   },
  &
  \bm{x} =&  \pmx{ \bm{w} \\ \bm{z} }
  &\text{and}&&
  \bm{h} &=  \pmx{ \bm{y} \\ \bm{0} }.
\end{align}
The dimension of that system is $N = m+n$.
The iterates for $\bm{z}$ approximate the parameter estimator $\bm{b}_{Aug}$.
The auxiliary matrix $\bm{K}$ is chosen following the indefinite
preconditioner approach proposed in \cite{LuksanVlcek:98}, is used:
\begin{align}\label{eq:invprec}
  \bm{K} = \pmx{ \bm{D} & \bm{X} \\ \bm{X}^T & \bm{0} }^{-1},
\end{align}
where $\bm{D} \in \Real^{m \times m}$ is an arbitrary symmetric and
non-singular matrix, meant to approximate the dispersion matrix
$\bm\Sigma$. In the limit case of $\bm{D}= \bm\Sigma$,
$\bm{GK} = \bm{I}$ and the PCG method will compute the exact solution
in only one step.
As $\bm{X}^T\bm{D}^{-1}X$ is non singular, an explicit expression for
$\bm{K}$ is the following
\begin{align}\label{eq:prec}
  \bm{K} =
  \pmx{ \bm\Pi & \bm{X}^{\star} \\ \bm{X}^{\star T} & - (\bm{X}^T \bm{D}^{-1}\bm{X})^{-1} },
\end{align}
where %
\begin{align}\label{eq:Xstar}
  \bm{X}^{\star} &= \bm{D}^{-1} \bm{X} (\bm{X}^T \bm{D}^{-1} \bm{X})^{-1}
  &&\text{and}&
  \bm\Pi &= (\bm{I} - \bm{X}^{\star} \bm{X}^T) \bm{D}^{-1}.
\end{align}
Notice that $\bm{X}^T\bm{X}^{\star} = \bm{I}$, 
$\bm\Pi \bm{X} = \bm{0}$ and $\bm\Pi \bm{D} \bm\Pi=\bm\Pi$, 
that is
$\bm{X}^{\star}$ is a pseudo-inverse of $\bm{X}$ and $\bm\Pi$ is an oblique
projection on the null space of $\bm{X}$.

The following lemma, that can be found in \cite{LuksanVlcek:98}, shows
that, this choice for the $\bm{K}$ reduces the number of steps to at
most $m-n+1$.
\begin{lemma}
  Let $\bm{G}$ and $\bm{K}$ be defined in \eqref{eq:Gaug} and
  \eqref{eq:invprec}, respectively.  Then, $\bm{GK}$ has at least $2n$ unit
  eigenvalues.
  \proof
  It is easy to verify that
  \begin{align}\label{eq:GK}
    \bm{GK} = \bm{I} +(\bm{G}-\bm{K}^{-1})\bm{K}
    = \pmx{
      \bm{H} & (\bm\Sigma-\bm{D})\bm{X}^{\star} \\
      \bm{0} & \bm{I}_n
    },
  \end{align}
  where $\bm{H} = \bm{I}_m + (\bm\Sigma-\bm{D})\bm\Pi$.  As $\bm{GK}$
  is upper triangular, with bottom-left identity block, it has $n$
  unit eigenvalues and the remaining ones correspond to those of its
  top-left block $\bm{H}$.
  Now, because $\bm{HX}= \bm{X}$ and $\bm{X}$ has full-cloumn rank, $\bm{H}$ has at
  least $n$ unit eigenvalues. Concluding $\bm{GK}$ has at least $2n$ unit
  eigenvalues and the remaining ones are given by the non-unit
  eigenvalues of $\bm{H}$.
  \endproof
\end{lemma}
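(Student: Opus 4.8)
The plan is to compute the product $\bm{GK}$ explicitly and read off its eigenvalues from its block structure. Rather than multiplying the two $2\times 2$ block matrices directly, I would first exploit the fact that $\bm{G}$ and the inverse preconditioner $\bm{K}^{-1} = \pmx{\bm{D} & \bm{X} \\ \bm{X}^T & \bm{0}}$ share the same off-diagonal and bottom-right blocks. Writing $\bm{GK} = \bm{K}^{-1}\bm{K} + (\bm{G}-\bm{K}^{-1})\bm{K} = \bm{I} + (\bm{G}-\bm{K}^{-1})\bm{K}$, the difference $\bm{G}-\bm{K}^{-1}$ collapses to $\pmx{\bm\Sigma-\bm{D} & \bm{0} \\ \bm{0} & \bm{0}}$, which has a nonzero block only in the top-left $m\times m$ corner. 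This is the decisive simplification, since it immediately annihilates the bottom block row of the product.

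Carrying out the multiplication with the explicit form \eqref{eq:prec} of $\bm{K}$, the bottom block row of $(\bm{G}-\bm{K}^{-1})\bm{K}$ vanishes and the top block row becomes $(\bm\Sigma-\bm{D})$ times the top block row $\pmx{\bm\Pi & \bm{X}^{\star}}$ of $\bm{K}$. Adding the identity then yields the block upper-triangular form $\bm{GK} = \pmx{\bm{H} & (\bm\Sigma-\bm{D})\bm{X}^{\star} \\ \bm{0} & \bm{I}_n}$ with $\bm{H} = \bm{I}_m + (\bm\Sigma-\bm{D})\bm\Pi$. Because this matrix is block triangular with bottom-right block $\bm{I}_n$, its spectrum is the union of the spectrum of $\bm{H}$ with $n$ eigenvalues equal to $1$; this accounts for the first $n$ unit eigenvalues.

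For the remaining $n$, I would show that $\bm{H}$ itself has at least $n$ unit eigenvalues by exhibiting $n$ independent eigenvectors. The natural candidates are the columns of $\bm{X}$: using the stated property $\bm\Pi\bm{X}=\bm{0}$, one computes $\bm{H}\bm{X} = \bm{X} + (\bm\Sigma-\bm{D})\bm\Pi\bm{X} = \bm{X}$, so every column of $\bm{X}$ is an eigenvector of $\bm{H}$ with eigenvalue $1$. Since $\bm{X}$ has full column rank $n$, these give $n$ linearly independent unit eigenvectors of $\bm{H}$.

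I expect no serious obstacle here: the only genuine insight is the rewriting $\bm{GK}=\bm{I}+(\bm{G}-\bm{K}^{-1})\bm{K}$, which turns an otherwise messy $2\times 2$ block product into a one-line calculation, and the rest is bookkeeping that relies only on the identities $\bm{X}^T\bm{X}^{\star}=\bm{I}$ and $\bm\Pi\bm{X}=\bm{0}$ recorded after \eqref{eq:Xstar}. The one point I would state carefully is that the count refers to eigenvalues with algebraic multiplicity: the block-triangular form gives the factorization $\det(\lambda\bm{I}-\bm{GK}) = (\lambda-1)^n\,\det(\lambda\bm{I}_m-\bm{H})$, and the $n$ independent unit eigenvectors furnished by $\bm{H}\bm{X}=\bm{X}$ force $(\lambda-1)^n$ to divide $\det(\lambda\bm{I}_m-\bm{H})$, so that $(\lambda-1)^{2n}$ divides the characteristic polynomial of $\bm{GK}$ and the eigenvalue $1$ has multiplicity at least $2n$.
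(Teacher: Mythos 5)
Your proof is correct and follows essentially the same route as the paper: the identity $\bm{GK}=\bm{I}+(\bm{G}-\bm{K}^{-1})\bm{K}$, the resulting block upper-triangular form with $\bm{H}=\bm{I}_m+(\bm\Sigma-\bm{D})\bm\Pi$, and the observation $\bm{HX}=\bm{X}$ with $\bm{X}$ of full column rank. The only difference is that you spell out details the paper leaves implicit (the explicit form of $\bm{G}-\bm{K}^{-1}$ and the geometric-to-algebraic multiplicity step), which is a welcome clarification but not a different argument.
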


\begin{corollary}
  In exact precision and in absence of breakdowns, the PCG method with
  the indefinite preconditioner defined in \eqref{eq:invprec}, needs
  at most $m-n+1$ iterations to convergence. The convergence profile
  is determined by the spectrum of $(\bm\Sigma-\bm{D})\bm\Pi$.
\end{corollary}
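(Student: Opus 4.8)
The plan is to read the corollary off the preceding Lemma together with Property \ref{t:finite}, so the argument is almost entirely eigenvalue bookkeeping. First I would record that $\bm{GK}$ is a square matrix of order $N = m+n$, and that the preceding Lemma has already exhibited its spectrum: at least $2n$ of its eigenvalues equal $1$, and the remaining ones are the non-unit eigenvalues of $\bm{H} = \bm{I}_m + (\bm\Sigma-\bm{D})\bm\Pi$. Since $2n$ of the $m+n$ eigenvalues are pinned to $1$, at most $(m+n)-2n = m-n$ of them can differ from $1$.

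Next I would identify where those non-unit eigenvalues come from. An eigenvalue $\nu$ of $\bm{H}$ is non-unit precisely when $\nu = 1 + \lambda$ for a nonzero eigenvalue $\lambda$ of $(\bm\Sigma-\bm{D})\bm\Pi$. Because $\bm\Pi$ is the oblique projector onto the null space of $\bm{X}$ and hence has rank $m-n$, the product $(\bm\Sigma-\bm{D})\bm\Pi$ has rank at most $m-n$, so it carries at most $m-n$ nonzero eigenvalues; this matches the count above and pins the origin of the non-unit part of the spectrum of $\bm{GK}$.

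Then I would pass to distinct eigenvalues: the value $1$ contributes a single distinct eigenvalue, while the at-most-$m-n$ remaining eigenvalues contribute at most $m-n$ further distinct values, for a total of at most $1+(m-n)=m-n+1$ distinct eigenvalues of $\bm{GK}$. Invoking Property \ref{t:finite}, which (in the absence of breakdowns and stagnations) equates the iteration count with the number of distinct eigenvalues of $\bm{GK}$, immediately gives the bound of $m-n+1$ steps. The closing clause of the corollary is then automatic, since the only freedom in the spectrum of $\bm{GK}$ lies in its non-unit eigenvalues, and these are exactly the nonzero eigenvalues of $(\bm\Sigma-\bm{D})\bm\Pi$ shifted by $1$, so the convergence profile is governed by that spectrum.

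The one place to take care is the transition from counting eigenvalues with multiplicity to counting distinct eigenvalues, and in checking that the hypotheses of Property \ref{t:finite} are actually available: the corollary assumes only absence of breakdowns, so I would either remark that a stagnation can only terminate the process early (never lengthen it beyond the distinct-eigenvalue count) or simply reuse the finite-termination reasoning behind Property \ref{t:finite}. No genuinely delicate computation is needed; the substance has already been supplied by the block-triangular form established in the preceding Lemma.
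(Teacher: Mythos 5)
Your argument is correct and is essentially the paper's own: the Corollary is stated there without a separate proof precisely because it follows, as you show, from the preceding Lemma (at least $2n$ unit eigenvalues of $\bm{GK}$, the rest coming from $\bm{H} = \bm{I}_m + (\bm\Sigma-\bm{D})\bm\Pi$) together with Property \ref{t:finite}. Your added care about the stagnation hypothesis and the rank-$(m-n)$ count for $(\bm\Sigma-\bm{D})\bm\Pi$ only makes explicit what the paper leaves implicit.
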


This Corollary indicates a further convergence speed-up that can be
achieved by properly choosing $\bm{D}$. This choice is application
specific, as it depends on the structure of the covariance matrix
$\bm\Sigma$,

The block upper-triangular structure of $\bm{GK}$ allows to further
characterize the iterates $\bm{p}_i$ and $\bm{f}_i$. Indeed, also the
powers of $\bm{GK}$ are block upper triangular, so by Property
\ref{t:krylov} it follows that, if the first iterate
$(\bm{w}_1;\bm{z}_1)$ is chosen such that $\bm{X}^T\bm{w}_1=\bm{0}$,
then $\bm{f}_1 = (\bm{r}_1 ; \bm{0} )$, and $\bm{f}_i$ and $\bm{p}_i$ have,
respectively, the structure
\begin{align*}
  \bm{f}_i &= \pmx{ \bm{r}_i \\ \bm{0}},
  &
  \bm{p}_i & = \bm{K}\pmx{ \bm{t}_i \\ \bm{0}} 
  =: \pmx{ \bm{u}_i \\ \bm{v}_i },
\end{align*}
where $\bm{r}_i,\bm{t}_i \in \tilde{\Krylov}_i 
:= \range( \bm{r}_1, \bm{Hr}_1,\ldots, \bm{H}^i\bm{r}_1)$.
More specifically,
\begin{align}
  \label{eq:uivi} 
  \bm{u}_i &= \bm\Pi \bm{t}_i
  &\text{and}&&
  \bm{v}_i &= \bm{X}^{\star T} \bm{t}_i ,
\end{align}
that is $\bm{u}_{i}$ and $\bm{v}_i$ belong, respectively, to the null
and to the range spaces of $\bm{X}^T$. These results allow to simplify
computations in Algorithm \ref{alg:pcg}, indeed
\begin{align*}
  d_i &= \bm{u}_i^T\bm\Sigma \bm{u}_i
  &
  \text{and}&&
  c_i &= \bm{r}_i^T \bm\Pi \bm{r}_i.
\end{align*}
The requirement of having a null lower block in $\bm{f}_1$ can be
easily met by choosing a null initial guess for $\bm{w}$ or one
belonging to the null space of $\bm{X}^T$:
$\bm{X}^T\bm{w}_1 = \bm{0}$.  Moreover, convergence needs to be
verified only on the first part of the residual vector because
$\bm{X}^T\bm{w}_i = \bm{0}$.
The resulting method is resumed in Algorithm \ref{alg:glmpcg}.

\begin{algorithm}[htb]
  \small
  \caption{}
  \label{alg:glmpcg}
  \begin{algorithmic}[1]
    \STATE %
    Given $\bm{z}_1$ arbitrary 
    and $\bm{w}_1$ such that $\bm{X}^T\bm{w}_1=\bm{0}$,
    \STATE %
    $\bm{r}_1 = \bm\Sigma \bm{w}_1 + \bm{X z}_1 - \bm{y}$, 
    $c_1 = \bm{r}_1^T \bm\Pi \bm{r}_1$
    \STATE %
    $\bm{u}_1 = \bm\Pi \bm{r}_1$, 
    $\bm{v}_1 = \bm{X}^{\star T} \bm{r}_1$ 
    \FOR{$i=1,2,\ldots, m-n+1$}
    \STATE %
    $d_i = \bm{u}_i^T\bm\Sigma \bm{u}_i$
    \STATE %
    $\lambda_i = c_i / d_i$, 
    \quad
    $\bm{z}_{i+1} = \bm{z}_i - \lambda_i \bm{v}_i$,
    \quad 
    $\bm{w}_{i+1} = \bm{w}_i - \lambda_i \bm{u}_i$
    \STATE %
    $\bm{r}_{i+1} = \bm{r}_i - \lambda_i (\bm\Sigma \bm{u}_i + \bm{Xv}_i )$,
    \STATE %
    $c_{i+1} = \bm{r}_{i+1}^T \bm\Pi \bm{r}_{i+1}$
    \IF{$c_{i+1}$ is small enough}
      \STATE terminate
    \ENDIF
    \STATE %
    $\mu_i = c_{i+1}/c_i$,
    \quad  
    $\bm{v}_{i+1} = \bm{X}^{\star T} \bm{r}_{i+1} + \mu_i \bm{v}_i$,
    \quad  
    $\bm{u}_{i+1} = \bm\Pi \bm{r}_{i+1} + \mu_i \bm{u}_i$  
    \ENDFOR
  \end{algorithmic}
\end{algorithm}

Theorem 3.5 in \cite{LuksanVlcek:98} states that when both $\bm{D}$
and $\bm{Q}_N^T \bm\Sigma \bm{Q}_N$ are positive definite, the PCG-Aug
method finds the value $\bm{w}$ that solves \eqref{eq:aug1} after at
most $m-n$ iterations.
If a breakdown does not occur in the successive step, the algorithm
will retrieve the $\bm{z}$ component of the solution. 
In the experience of the author, such a breakdown is likely to arise
at that iteration.  Nonetheless, the full solution can be recovered
from $\bm{w}$. Suppose the exact $\bm{w}$ is computed at the
$i^{\star}$-th iteration, that is $\bm{w}_{i^\star} = \bm{w}$, then
$\bm\Sigma \bm{w}_{i^\star} + \bm{X}\bm{z} = \bm{y}$ and thus
\begin{align*}
  \hat{\bm{z}}_{i^\star} = \bm{X}^{\star T}( \bm{y}- \bm\Sigma \bm{w}_{i^\star} )
\end{align*}
is the solution to \eqref{eq:aug1}. Note that, the approximation
$\bm{z}_{i^\star}$ is not needed for that computation.
The complete method which takes into consideration these issues is
given in Algorithm \ref{alg:glmpcg2} and will be called PCG-Aug. The
algorithm terminates when the seminorm $\bm{r}_i^T \bm{\Pi} \bm{r}_i$
is not anymore able to decrease, when it is small enough, or when both
conditions occur.

\begin{algorithm}[htb]
  \small
  \caption{The PCG-Aug method}\label{alg:glmpcg2}
  \begin{algorithmic}[1]
    \STATE %
    Given $\bm{z}_1$ arbitrary 
    and $\bm{w}_1$ such that $\bm{X}^T\bm{w}_1=\bm{0}$,
    \STATE %
    $\bm{r}_1 = \bm\Sigma \bm{w}_1 + \bm{X z}_1 - \bm{y}$, 
    $c_1 = \bm{r}_1^T \bm\Pi \bm{r}_1$
    \STATE %
    $\bm{u}_1 = \bm\Pi \bm{r}_1$, 
    $\bm{v}_1 = \bm{X}^{\star T} \bm{r}_1$ 
    \FOR{$i=1,2,\ldots, m-n+1$}
    \STATE %
    $d_i = \bm{u}_i^T\bm\Sigma \bm{u}_i$
    \STATE %
    $\lambda_i = c_i / d_i$, 
    \quad 
    $\bm{w}_{i+1} = \bm{w}_i - \lambda_i \bm{u}_i$
    \STATE %
    $\bm{r}_{i+1} = \bm{r}_i - \lambda_i (\bm\Sigma \bm{u}_i + \bm{Xv}_i )$,
    \STATE %
    $c_{i+1} = \bm{r}_{i+1}^T \bm\Pi \bm{r}_{i+1}$
    \IF{$c_{i+1}$ is small enough}
    \STATE %
    terminate and return 
    $\hat{\bm{z}}_{i+1} = \bm{X}^{\star T}( \bm{y} - \bm\Sigma \bm{w}_{i+1})$
    \ENDIF
    \STATE $\mu_i = c_{i+1} / c_i$,
      \quad $\bm{v}_{i+1} = \bm{X}^{\star T} \bm{r}_{i+1} + \mu_i \bm{v}_i$,
      \quad $\bm{u}_{i+1} = \bm\Pi \bm{r}_{i+1} + \mu_i \bm{u}_i$  
    \ENDFOR
  \end{algorithmic}
\end{algorithm}
Another version of the same method can be obtained by considering the
following decomposition
\begin{align*}
  \bm{s}_i &= \bm{D} \bm{u}_i + \bm{X} \bm{v}_i,
  &
  \bm{u}_i &= \bm\Pi \bm{s}_i, \quad
  \bm{v}_i = \bm{X}^{*T} \bm{s}_i,
\end{align*}
since $\bm{\Pi} \bm{X} = \bm{0}$, $\bm\Pi \bm{D} \bm\Pi= \bm\Pi$ and
$\bm{X}^{*T}\bm{D} \bm\Pi=\bm{0}$. Then, the iterations for $\bm{u}_i$
and $\bm{v}_i$ in Step 12 of Algorithm \ref{alg:glmpcg2} can be
replaced by the recurrence
$\bm{s}_{i+1} = \bm{r}_{i+1} + \mu_i \bm{s}_i$, $\bm{s}_1 = \bm{r}_1$.
The resulting method is given in Algorithm \ref{alg:glmpcg3}.
\begin{algorithm}[htb]
  \small
  \caption{The PCG-Aug method (alternative version)}\label{alg:glmpcg3}
  \begin{algorithmic}[1]
    \STATE %
    Given $\bm{z}_1$ arbitrary 
    and $\bm{w}_1$ such that $\bm{X}^T\bm{w}_1=\bm{0}$,
    \STATE %
    $\bm{r}_1 = \bm\Sigma \bm{w}_1 + \bm{Xz}_1 - \bm{y}$, 
    $c_1 = \bm{r}_1^T \bm\Pi \bm{r}_1$
    \STATE $\bm{s}_1 = \bm{r}_1$
    \FOR{$i=1,2,\ldots, m-n+1$}
    \STATE $d_i = \bm{s}_i^T\bm{\Pi\Sigma\Pi} \bm{s}_i$
    \STATE $\lambda_i = c_i / d_i$, 
      \quad $\bm{w}_{i+1} = \bm{w}_i - \lambda_i \bm{\Pi s}_i$
    \STATE %
      $\bm{r}_{i+1} = \bm{r}_i 
       - \lambda_i (\bm{\Sigma \Pi} + \bm{XX}^{*T})\bm{s}_i$,
    \STATE $c_{i+1} = \bm{r}_{i+1}^T \bm{\Pi r}_{i+1}$
    \IF{$c_{i+1}$ is small enough}
    \STATE terminate and return 
    $\hat{\bm{z}}_{i+1} = \bm{X}^{\star T}( \bm{y} - \bm{\Sigma} \bm{w}_{i+1})$
    \ENDIF
    \STATE $\mu_i = c_{i+1} / c_i$,
    \quad $\bm{s}_{i+1} = \bm{r}_{i+1} + \mu_i \bm{s}_i$
    \ENDFOR
  \end{algorithmic}
\end{algorithm}
\medskip

In order to further reduce computations and to get a better
understanding of the iterates $\bm{w}_i$ computed by Algorithms
\ref{alg:glmpcg} or \ref{alg:glmpcg2}, decompose $\bm{r}_i$,
$\bm{u}_i$ and $\bm{w}_i$ on their components on the range and null
spaces of $\bm{X}$:
\begin{align}\label{eq:urwProj}
  \bm{r}_i &= \bm{Q}_N \tilde{\bm{r}}_i + \bm{Q}_R \hat{\bm{r}}_i,
  &
  \bm{u}_i &= \bm{Q}_N \tilde{\bm{u}}_i
  &\text{and}&&
  \bm{w}_i &= \bm{Q}_N \tilde{\bm{w}}_i.
\end{align}
It follows that
\begin{align*}
   c_i &= \tilde{\bm{r}}_i^T \bm{B}^{-1} \tilde{\bm{r}}_i,
  &
  d_i &= \tilde{\bm{u}}_i^T \bm{A} \tilde{\bm{u}}_i,
  &
  \tilde{w}_{i+1} &= \tilde{w}_i - \tilde{u}_i \lambda_i,
  \intertext{and}
  \tilde{u}_{i+1} &= \bm{B}^{-1} \tilde{\bm{r}}_{i+1} + \mu_i \tilde{u}_i
\end{align*}
where $\bm{A} = \bm{Q}_N^T\bm{\Sigma} \bm{Q}_N$ and
$\bm{B}=\bm{Q}_N^T \bm{D} \bm{Q}_N$. Now, the direction vectors
$\bm{v}_i$ are no longer necessary and the method for computing the
approximation $\tilde{\bm{w}}_i$ reduces to the PCG method applied to
a positive definite system with coefficient matrix $\bm{A}$ and using
$\bm{B}^{-1}$ as preconditioner (see Theorem 3.5 in
\cite{LuksanVlcek:98}). More precisely, the system solved is given
\begin{align*}
  (\bm{Q}_N^T \bm{\Sigma} \bm{Q}_N) \tilde{\bm{w}} = \bm{Q}_N^T y
\end{align*}
Property \ref{t:optim} implies that the errors norms are
non-increasing in the sense that
\begin{align}\label{eq:wOptim}
  (\bm{w}_{i+1} - \bm{w})^T \bm{\Sigma} (\bm{w}_{i+1}-\bm{w}) 
  \leq 
  (\bm{w}_i - \bm{w})^T \bm{\Sigma} (\bm{w}_i-\bm{w}).
\end{align}
Regarding the convergence, a further bound is given by
\begin{align}\label{eq:wOptim2}
  \frac{    \|\bm{w}_i - \bm{w}\|_2  }{    \|\bm{w}_1 - \bm{w}\|_2  }
  \leq
  2\sqrt{\kappa}
  \left(
    \frac{      1-\sqrt{\kappa}    }{      1 + \sqrt{\kappa}    }
  \right)^{i-1},
\end{align}
where $\kappa$ is condition number of the matrix $\bm{AB}^{-1}$, that is the
ratio between the largest and smaller eigenvalues of $\bm{AB}^{-1}$.
\medskip

From a computational point of view, it should be noted that
$\bm{X}^{\star T}\bm{r}_i$ and $\bm{\Pi} \bm{r}_i$
computed in steps 8 and 12, correspond to the GLS estimator and the
residuals of the GLM
\begin{align}\label{eq:auxglm}
  \bm{r}_i &= \bm{X} \bm{\gamma}_i + \bm{\eta},  & \bm{\eta} &\sim(\bm{0},\bm{D}).
\end{align}
That is, at each step an auxiliary GLM \eqref{eq:auxglm} need to be
estimated.
To obtain advantages from this approach, this auxiliary GLM needs to
be solved in a simple and fast manner. For instance, when a direct
method is used for that purpose, the required matrix factorizations
can be computed once at the beginning of the algorithm so that step 8
will involve only matrix multiplications and inversions of triangular
linear systems. Clearly, the cost of those factorizations depends on
the choice of $\bm{D}$.
On the other side, as previously noted, choosing $\bm{D}$ as a good
approximation to $\bm{\Sigma}$ accelerates the convergence or reduces
the number of iterations.
Then, that choice needs to balance between a good approximation to
$\bm{\Sigma}$ and a fast estimation of the GLM \eqref{eq:auxglm}.

\subsection{Scaling of $\bm\Sigma$ and convergence}
\label{sec:conv}

Eventough rescaling the covariance matrix $\bm\Sigma$ or its
approximation $\bm{D}$ has no effect on the GLS estimator, it directly
alters the spectrum of the matrix $\bm{KG}$ with consequences on the
convergence and numerical stability of the PCG-Aug method.
These effects are experimentally tested in the following
setup\footnote{ All the experiments in this work have been performed
  using Matlab R2016a on a 2.8 GHz Intel Core i7 running OS X 10.11.
}. Fixed the dimensions $m=300$ and $n=50$, $\bm{X}$ and $\bm\Sigma$
are randomly generated as follows. The first column of $\bm{X}$ is
constant and the other elements are independent samples drawn from a
normal distribution with zero mean and variance equal to $m$.  The
covariance matrix $\bm\Sigma$ has four fixed distinct eigenvalues,
$\frac12 \alpha$, $\alpha$, $\frac32 \alpha$ and $2\alpha$, each one
with multiplicity 75. The corresponding eigenvectors are randomly
generated (see the attached code for details). The auxiliary matrix
$\bm{D}$ is fixed to the identity matrix.

The convergence of the PCG-Aug method is studied for three different
values of the scaling factor: $\alpha = 1$, $\alpha=\frac14$ and
$\alpha=4$. In all the three cases the condition number of $\bm{KG}$
is not large. More precisely, that condition number is $4$ for
$\alpha=1$ and $8$ for the other two cases.  However, the convergence
and numerical performances of PCG methods are determined by the whole
spectrum of $\bm{KG}$. That spectrum is shown in Figure \ref{fig:eigA}
for the three choices of $\alpha$. For $\alpha = 1$ the spectrum of
$\bf{GK}$ has no large discontinuities and the block of unit
eigenvalues lies in the middle of the spectrum. Instead, for the other
two cases, namely $\alpha= \frac14$ and $\alpha=4$, there is a large
gap between that block of eigenvalues and the rest of the
spectrum. The presence of this gap has serious consequences on the
numerical stability of the method as shown in Figure
\ref{fig:eigB}. The non-pathological case ($\alpha=1$) shows a
convergence to a numerically precise solution much before the
theoretical bound of $m-n= 250$ steps. The other two cases exhibit the
same convergence speed but a breakdown occur before convergence is
achieved.  Since the error $\hat{z}_i - \beta$ is not known, the
convergence in terms of the norms of the residuals
$\bm{y} - \bm{X}\hat{\bm{z}}_i -\bm\Sigma\hat{\bm{z}}_i$ and
$\bm{X}^T \hat{\bm{\omega}}_i$ is also reported in Figure
\ref{fig:conv}.  The third case $\alpha=\frac14$ (not shown in that
figure) exhibit an analogous relation between errors and residuals.
Clearly, the convergence on the error can be monitored by looking at
the residuals only.

\begin{figure}[h]
  \centering
  
  \subfigure[]{\label{fig:eigA}\includegraphics*[scale=.5,trim=4 0 20 0]{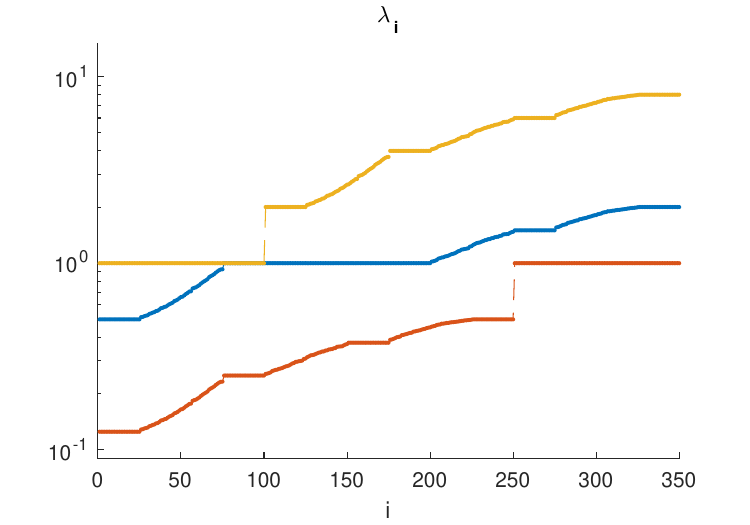}}
  \subfigure[]{\label{fig:eigB}\includegraphics*[scale=.5,trim=4 0 20 0]{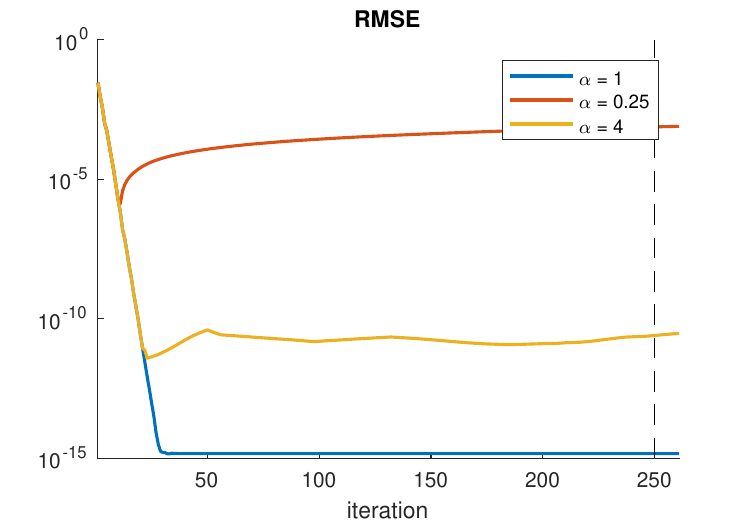}}
  
  \caption{Eigenvalue distribution of $\bf{KG}$ (left panel) and
    convergence profile (right panel). The convergence is expressed in
    terms of the root-mean-square error
    $\text{RMSE} = n^{-\frac12}\|\hat{\bm{z}}_i - \bm\beta\|_2$.}
  \label{fig:eig}
\end{figure}

\begin{figure}[htb]
  \centering

  \subfigure[]{\label{fig:convA}\includegraphics*[scale=.5,trim=4 0 20 0]{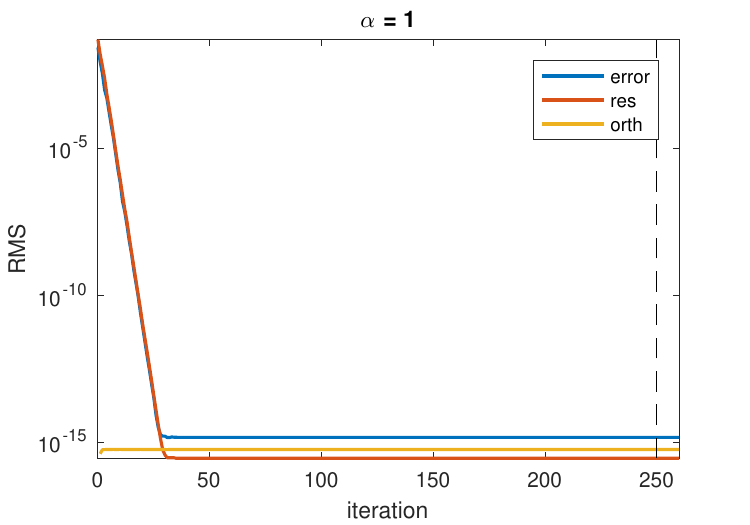}}
  \subfigure[]{\label{fig:convB}\includegraphics*[scale=.5,trim=4 0 20 0]{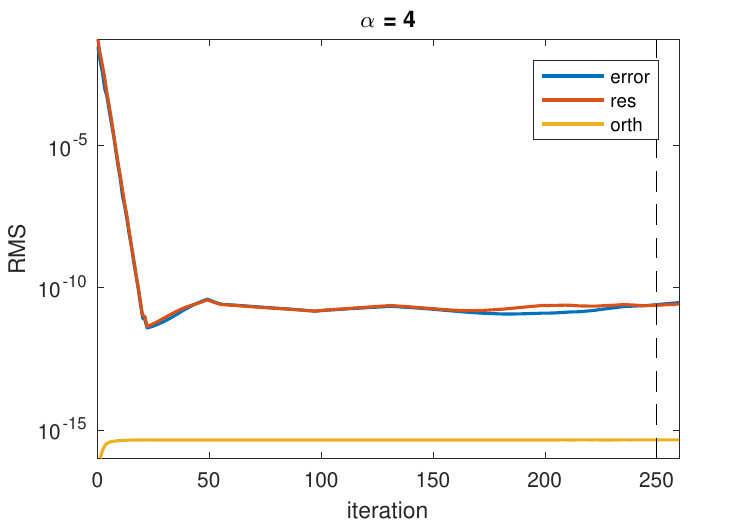}}
  
  \caption{Convergence profiles for $\alpha = 2$ (left) and
    $\alpha = 8$ (right).  The RMS (root-mean-square) of the error
    $\hat{\bm{z}}_i - \bm\beta$ and of the residuals
    $\bm{y} - \bm{X}\hat{\bm{z}}_i - \bm\Sigma\hat{\bm\omega}_i$ and
    $\bm{X}^T\hat{\bm\omega}_i$ are shown.  }
  \label{fig:conv}
\end{figure}

\subsection{Statistical properties of the PCG-Aug estimator}
\label{sec:glmpcgUnbiased}

The error of the estimator $\hat{\bm{z}}_i$ is now considered.  Recall
that
\begin{align*}
  \hat{\bm{z}}_i  = \bm{X}^{\star T} (\bm{y} - \bm{\Sigma} \bm{w}_i),
\end{align*}
so that the estimator error is given by
\begin{align*}
  \hat{\bm{z}}_i - \bm{\beta} 
  &= \bm{X}^{\star T}(\bm{\eps} - \bm{\Sigma} \bm{w}_i)
  = \bm{X}^{\star T}(\bm{\eps} - \bm{\Sigma} \bm{w}) 
  + \bm{X}^{\star T} \bm{\Sigma} ( \bm{w} - \bm{w}_i ).
\end{align*}

The following Theorem states that, for any iteration $i$, the
$\hat{\bm{z}}_i$, the PCG-Aug estimator, is an unbiased estimator for
$\bm{\beta}$ and that the transformed residuals $\bm{\omega}_i$ have zero mean.

\begin{theorem}\label{t:glmPcgUnbiased}
  If $\bm{\eps}$ is symmetrically distributed and $\bm{w}_1=\bm{0}$,
  then the iterates $\hat{\bm{z}}_i$ and $\bm{w}_i$ computed at the $i$-th
  iteration of algorithm \ref{alg:glmpcg2} have expected values
  \begin{align}\label{eQ:zUnbiased}
    \E[\hat{\bm{z}}_i] &= \bm{\beta}
    &\text{and} &&
    \E[\bm{w}_i]  &= \bm{0}.
  \end{align}
  
  \proof

  To prove \eqref{eQ:zUnbiased} it will be shown by induction that
  $\bm{u}_i, \bm{w}_i$ and $\bm{\Pi} \bm{r}_i$ are odd functions of $\bm{\eps}$ when $i>1$.
  Firstly, notice that
  $\bm{w} = \bm{Q}_N (\bm{Q}_N^T\bm{\Sigma} \bm{Q}_N)^{-1}\bm{Q}_N^T \bm{\eps}$
  is an odd function of $\bm{\eps}$.  Now, by induction, if $\bm{u}_i$ and $\bm{w}_i$
  are odd, then $d_i$ is odd. Then as
  \begin{align*}
    \bm{\Pi} \bm{r}_i = \bm{\Pi} \bm{\Sigma} (\bm{w}_i - \bm{w}) 
  \end{align*}
  is odd, $c_i = \bm{r}_i^T\bm{\Pi} \bm{r}_i$ is even and
  $\bm{w}_{i+1} = \bm{w}_i - c_i/d_i \bm{v}_i$ computed at line 6 of
  algorithm \ref{alg:glmpcg} is odd. Then, as $c_{i+1}$ is even, it
  follows that $\bm{u}_{i+1}$ computed at line 12 of the algorithm is
  odd. As $\bm{w}_1 = \bm{0}$, $\bm{\Pi} \bm{r}_1 = \bm{\Pi}\bm{\eps}$
  and $\bm{u}_1 = \bm{\Pi} \bm{r}_1$, then $\bm{w}_i$,
  $\bm{Pi}\bm{r}_i$ and $\bm{u}_i$ are odd functions of $\bm{\eps}$
  and, thus, have null expectation.
  It also follows that $\hat{\bm{z}}_i$ computed at step 10 of algorithm
  \eqref{alg:glmpcg2}, is unbiased. Indeed,
  $\hat{\bm{z}}_i = \bm{\beta} - \bm{X}^{\star T}\bm{\Sigma} \bm{w}_i$ and
  $\E[\hat{\bm{z}}_i] = \bm{\beta}$.
  \endproof
\end{theorem}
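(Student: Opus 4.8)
The strategy is a parity (symmetry) argument. Since $\bm{\eps}$ is symmetrically distributed, $\bm{\eps}$ and $-\bm{\eps}$ have the same law, so any measurable odd function $\bm{g}$ of the disturbance, meaning $\bm{g}(-\bm{\eps}) = -\bm{g}(\bm{\eps})$, satisfies $\E[\bm{g}(\bm{\eps})] = \E[\bm{g}(-\bm{\eps})] = -\E[\bm{g}(\bm{\eps})]$ and hence has zero mean whenever the expectation exists. Because $\bm{X}$, $\bm{\beta}$, $\bm{\Sigma}$, $\bm{D}$, and the initial guesses $\bm{z}_1$ and $\bm{w}_1 = \bm{0}$ are all deterministic, every quantity produced by Algorithm \ref{alg:glmpcg2} is a fixed function of $\bm{\eps}$ through $\bm{y} = \bm{X}\bm{\beta} + \bm{\eps}$. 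The plan is therefore to classify each such quantity as an \emph{odd} or an \emph{even} function of $\bm{\eps}$ and to show, by induction on $i$, that $\bm{w}_i$, $\bm{u}_i$, and $\bm{\Pi}\bm{r}_i$ are odd, while the scalars $c_i$, $d_i$, $\lambda_i$, $\mu_i$ are even.

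For the reduction to the estimator, I would start from $\hat{\bm{z}}_i = \bm{X}^{\star T}(\bm{y} - \bm{\Sigma}\bm{w}_i)$ and use $\bm{X}^{\star T}\bm{X} = \bm{I}$ to write $\hat{\bm{z}}_i - \bm{\beta} = \bm{X}^{\star T}\bm{\eps} - \bm{X}^{\star T}\bm{\Sigma}\bm{w}_i$. The first term is linear, hence odd, and the second is odd as soon as $\bm{w}_i$ is odd; so the whole error is odd and $\E[\hat{\bm{z}}_i] = \bm{\beta}$ follows from the parity principle, exactly as $\E[\bm{w}_i] = \bm{0}$ follows from oddness of $\bm{w}_i$. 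Thus both conclusions in \eqref{eQ:zUnbiased} reduce to the single claim that $\bm{w}_i$ is an odd function of $\bm{\eps}$.

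The induction itself I would carry out along the recurrences of Algorithm \ref{alg:glmpcg2}. For the base case, $\bm{w}_1 = \bm{0}$ is trivially odd, and since $\bm{\Pi}\bm{X} = \bm{0}$ the initial residual gives $\bm{\Pi}\bm{r}_1 = -\bm{\Pi}\bm{\eps}$ and $\bm{u}_1 = \bm{\Pi}\bm{r}_1$, both linear and hence odd. For the inductive step, assuming $\bm{w}_i$, $\bm{u}_i$, $\bm{\Pi}\bm{r}_i$ odd: $d_i = \bm{u}_i^T\bm{\Sigma}\bm{u}_i$ is a quadratic form in an odd vector and hence even; $\lambda_i = c_i/d_i$ and $\mu_i = c_{i+1}/c_i$ are ratios of even quantities and hence even; $\bm{w}_{i+1} = \bm{w}_i - \lambda_i\bm{u}_i$ is odd minus (even times odd), hence odd; applying $\bm{\Pi}$ to the residual recurrence and using $\bm{\Pi}\bm{X}=\bm{0}$ gives $\bm{\Pi}\bm{r}_{i+1} = \bm{\Pi}\bm{r}_i - \lambda_i\bm{\Pi}\bm{\Sigma}\bm{u}_i$, odd; and finally $\bm{u}_{i+1} = \bm{\Pi}\bm{r}_{i+1} + \mu_i\bm{u}_i$ is odd. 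This closes the induction.

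The delicate point — and the one I expect to be the main obstacle — is establishing that $c_i = \bm{r}_i^T\bm{\Pi}\bm{r}_i$ is even, because $\bm{r}_i$ itself is \emph{not} odd: it carries the deterministic component coming from the arbitrary starting guess $\bm{z}_1$. The resolution is that $c_i$ depends on $\bm{r}_i$ only through its projection: since $\bm{\Pi}\bm{X} = \bm{0}$ and the range-space part of $\bm{r}_i$ lies in the column space of $\bm{X}$, that part is annihilated on both sides of the quadratic form, leaving $c_i$ a quadratic form in the null-space coordinate $\tilde{\bm{r}}_i = \bm{Q}_N^T\bm{r}_i$. The identity $\bm{\Pi}\bm{r}_i = \bm{\Pi}\bm{\Sigma}(\bm{w}_i - \bm{w})$, equivalently $\tilde{\bm{r}}_i = \bm{Q}_N^T\bm{\Sigma}(\bm{w}_i - \bm{w})$ using $\bm{Q}_N^T\bm{X} = \bm{0}$, then exhibits this coordinate as odd, since $\bm{w}_i$ is odd by the inductive hypothesis and $\bm{w} = \bm{Q}_N(\bm{Q}_N^T\bm{\Sigma}\bm{Q}_N)^{-1}\bm{Q}_N^T\bm{\eps}$ is linear, hence odd. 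Consequently $c_i$ is even, which is precisely what the recurrences for $\lambda_i$ and $\mu_i$ require. Once this is in place, the induction and the reduction in the previous paragraphs complete the proof.
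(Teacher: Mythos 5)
Your proof is correct and takes essentially the same route as the paper: an induction showing that $\bm{w}_i$, $\bm{u}_i$ and $\bm{\Pi}\bm{r}_i$ are odd functions of $\bm{\eps}$ while the scalars $c_i$, $d_i$, $\lambda_i$, $\mu_i$ are even, followed by the observation that odd functions of a symmetrically distributed $\bm{\eps}$ have zero mean. Your write-up is in fact slightly more careful than the paper's at a few points --- the paper's claim that ``$d_i$ is odd'' is a typo for \emph{even}, its base case $\bm{\Pi}\bm{r}_1 = \bm{\Pi}\bm{\eps}$ has the (parity-irrelevant) wrong sign, and its final expression for $\hat{\bm{z}}_i$ drops the odd term $\bm{X}^{\star T}\bm{\eps}$ that you correctly retain --- but the underlying argument, including the key identity $\bm{\Pi}\bm{r}_i = \bm{\Pi}\bm{\Sigma}(\bm{w}_i-\bm{w})$ used to handle the non-odd residual $\bm{r}_i$, is identical.
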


Next, the following Lemma characterizes the convergence of the errors
$\bm{z}_i-\bm{\beta}$ for $i=1,2,\ldots$.

\begin{lemma}\label{t:glmPCGconv}
  If $\bm{X}^T \bm{\Sigma} \bm{X}$ is positive definite, then
  \begin{align}\label{eq:zConv}
    (\hat{\bm{z}}_i - \bm{\beta})^T 
    (\bm{X}^{\star T}\bm{\Sigma} \bm{X}^{\star})^{-1} 
    (\hat{\bm{z}}_i - \bm{\beta})
    \leq \zeta_i,
  \end{align}
  for some decreasing sequence $\zeta_1 > \cdots > \zeta_i > \zeta_{i+1}$.
  
  \begin{proof}
  As $\bm{Q}_N^T\bm{y} = \bm{Q}_N^T \bm{\eps}$, from \eqref{eq:augsol} it follows that
  $\bm{\eps} - \bm{\Sigma} \bm{w} = \bm{P}_N \bm{\eps}$, and thus, the error reduces to
  \begin{align}\label{eq:biasz}
    \hat{\bm{z}}_i - \bm{\beta} 
    &= 
    \bm{X}^{\star T} \big( \bm{P}_N \bm{\eps} + \bm{\Sigma} ( \bm{w} - \bm{w}_i ) \big).
  \end{align}
  Now, consider the quadratic form
  $q = 
  (\hat{\bm{z}}_i - \bm{\beta})^T 
  (\bm{X}^{\star T}\bm{\Sigma} \bm{X}^{\star})^{-1} 
  (\hat{\bm{z}}_i - \bm{\beta})
  $
  which can be rewritten by means of \eqref{eq:biasz} as
  \begin{align*}
    q =
    (\bm{P}_N \bm{\eps} + \bm{\Sigma}(\bm{w} - \bm{w}_i))^T
    \bm{J} 
    (\bm{P}_N \bm{\eps} + \bm{\Sigma}(\bm{w} - \bm{w}_i) ),
  \end{align*}
  where 
  $\bm{J} 
  = \bm{X}^\star (\bm{X}^{\star T}\bm{\Sigma} \bm{X}^{\star})^{-1} \bm{X}^{\star T}
  =  \bm{X}(\bm{X}^T\bm{\Sigma} \bm{X})^{-1} \bm{X}^T$.
  By the triangle inequality, 
  \begin{align}\label{eq:expr0}
    q 
    \leq
    \bm{\eps}^T \bm{P}_N^T \bm{J} \bm{P}_N \bm{\eps}
    + 
    (\bm{w} - \bm{w}_i)^T \bm{\Sigma}\bm{J}\bm{\Sigma}(\bm{w} - \bm{w}_i).
  \end{align}
  The first term in \eqref{eq:expr0} does not depend on the iteration
  number. The second term can be bounded as follows
  \begin{align}
    \notag
    (\bm{w} - \bm{w}_i)^T \bm{\Sigma}\bm{J}\bm{\Sigma}(\bm{w} - \bm{w}_i)
    &\leq 
    (\bm{w} - \bm{w}_i)^T \bm{\Sigma}(\bm{w} - \bm{w}_i)
    \| \bm{\Sigma}^{\frac12}\bm{J}\bm{\Sigma}^{\frac12} \|
    \\
    &\leq 
    (\bm{w} - \bm{w}_i)^T \bm{\Sigma}(\bm{w} - \bm{w}_i),
    \label{eq:expr1}
  \end{align}
  where the last inequality follows from the fact that
  $\bm{\Sigma}^{\frac12}\bm{J}\bm{\Sigma}^{\frac12}$ is an idempotent
  and non-negative definite matrix and thus its maximal eigenvalue is 1.
  Finally, from \eqref{eq:expr0} and \eqref{eq:expr1} it follows
  \begin{align}\label{eq:zConvBounds}
    q \leq 
    \zeta_i :=
    \bm{\eps}^T \bm{P}_N^T \bm{J} \bm{P}_N \bm{\eps}
    +
    (\bm{w}_i-\bm{w})^T\bm{\Sigma} (\bm{w}_i-\bm{w}),
  \end{align}
  where, by \eqref{eq:wOptim2}, $\zeta_{i+1} < \zeta_i$.
  \end{proof}
\end{lemma}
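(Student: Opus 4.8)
The plan is to separate the estimator error into a part carried by the limiting BLUE, which does not depend on the iteration, and a part governed by the iterate error $\bm{w}-\bm{w}_i$, and then to bound the quadratic form in \eqref{eq:zConv} by a monotone envelope of the second part. First I would rewrite the error. Because $\bm{Q}_N^T\bm{X}=\bm{0}$ forces $\bm{Q}_N^T\bm{y}=\bm{Q}_N^T\bm{\eps}$, the formula for $\bm{w}$ in \eqref{eq:augsol} gives $\bm{\eps}-\bm{\Sigma}\bm{w}=\bm{P}_N\bm{\eps}$, so that $\hat{\bm{z}}_i-\bm{\beta}=\bm{X}^{\star T}\bigl(\bm{P}_N\bm{\eps}+\bm{\Sigma}(\bm{w}-\bm{w}_i)\bigr)$. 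The first summand is the fixed error $\bm{X}^{\star T}\bm{P}_N\bm{\eps}$ of the BLUE from Lemma \ref{thm:AugBLUE}, and the second is the iteration error, which vanishes once $\bm{w}_i=\bm{w}$.

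Next I would recast the left-hand side of \eqref{eq:zConv} as a seminorm on $\Real^m$. Setting $\bm{M}=(\bm{X}^{\star T}\bm{\Sigma}\bm{X}^\star)^{-1}$ and $\bm{J}=\bm{X}^\star\bm{M}\bm{X}^{\star T}$, the quadratic form equals $\bm{\xi}^T\bm{J}\bm{\xi}$ with $\bm{\xi}=\bm{P}_N\bm{\eps}+\bm{\Sigma}(\bm{w}-\bm{w}_i)$. Under the hypothesis that $\bm{X}^T\bm{\Sigma}\bm{X}$ (equivalently $\bm{X}^{\star T}\bm{\Sigma}\bm{X}^\star$) is positive definite, $\bm{J}$ is symmetric and non-negative definite, so $\|\bm{v}\|_{\bm{J}}:=(\bm{v}^T\bm{J}\bm{v})^{1/2}$ is a seminorm. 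The single structural fact I would isolate is that $\bm{\Sigma}^{1/2}\bm{J}\bm{\Sigma}^{1/2}=\bm{W}(\bm{W}^T\bm{W})^{-1}\bm{W}^T$ with $\bm{W}=\bm{\Sigma}^{1/2}\bm{X}^\star$ is the orthogonal projector onto $\range(\bm{W})$, hence idempotent with largest eigenvalue $1$. This yields the contraction $(\bm{w}-\bm{w}_i)^T\bm{\Sigma}\bm{J}\bm{\Sigma}(\bm{w}-\bm{w}_i)\le(\bm{w}-\bm{w}_i)^T\bm{\Sigma}(\bm{w}-\bm{w}_i)=:\rho_i$.

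I would then apply the triangle inequality for $\|\cdot\|_{\bm{J}}$ to $\bm{\xi}$, obtaining $\|\bm{\xi}\|_{\bm{J}}\le\|\bm{P}_N\bm{\eps}\|_{\bm{J}}+\|\bm{\Sigma}(\bm{w}-\bm{w}_i)\|_{\bm{J}}\le\sqrt{q_\infty}+\sqrt{\rho_i}$, where $q_\infty=\bm{\eps}^T\bm{P}_N^T\bm{J}\bm{P}_N\bm{\eps}$ does not depend on $i$. Squaring produces the envelope $\zeta_i:=(\sqrt{q_\infty}+\sqrt{\rho_i})^2$. To finish, Property \ref{t:optim} together with \eqref{eq:wOptim} shows that $\rho_i$ is non-increasing, and \eqref{eq:wOptim2} shows it strictly decreases while $\bm{w}_i\neq\bm{w}$; since $x\mapsto(\sqrt{q_\infty}+\sqrt{x})^2$ is increasing, $\zeta_i$ is strictly decreasing, which is the assertion of the lemma.

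The step I expect to be the main obstacle is the treatment of the cross term. The more economical envelope $q_\infty+\rho_i$ would require $(\bm{P}_N\bm{\eps})^T\bm{J}\bm{\Sigma}(\bm{w}-\bm{w}_i)\le0$, which I do not expect to hold pathwise for general $\bm{D}$ and $\bm{\Sigma}$; the square-of-sum envelope above avoids this difficulty while preserving monotonicity, at the price of a looser constant. The only other delicate point is verifying the projector identity for $\bm{\Sigma}^{1/2}\bm{J}\bm{\Sigma}^{1/2}$, which is exactly where the positive-definiteness hypothesis enters and on which the contraction bound rests.
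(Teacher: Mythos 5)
Your proof is correct, and it shadows the paper's own argument step for step --- the same reduction $\hat{\bm{z}}_i-\bm{\beta}=\bm{X}^{\star T}\bigl(\bm{P}_N\bm{\eps}+\bm{\Sigma}(\bm{w}-\bm{w}_i)\bigr)$, the same matrix $\bm{J}$, the same idempotency fact for $\bm{\Sigma}^{1/2}\bm{J}\bm{\Sigma}^{1/2}$, and the same appeal to \eqref{eq:wOptim}--\eqref{eq:wOptim2} for monotonicity --- except at the one delicate step, where your version is the rigorous one. The paper's inequality \eqref{eq:expr0} claims, ``by the triangle inequality,'' that $q$ is bounded by the plain sum $\bm{\eps}^T\bm{P}_N^T\bm{J}\bm{P}_N\bm{\eps}+(\bm{w}-\bm{w}_i)^T\bm{\Sigma}\bm{J}\bm{\Sigma}(\bm{w}-\bm{w}_i)$; that is not what the triangle inequality gives, since expanding $q$ produces the cross term $2(\bm{P}_N\bm{\eps})^T\bm{J}\bm{\Sigma}(\bm{w}-\bm{w}_i)$, which has no sign control for general $\bm{D}$ and $\bm{\Sigma}$ --- exactly the obstacle you flagged. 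The correct consequence of the triangle inequality in the $\bm{J}$-seminorm is your envelope $\zeta_i=\bigl(\sqrt{q_\infty}+\sqrt{\rho_i}\bigr)^2$ with $q_\infty=\bm{\eps}^T\bm{P}_N^T\bm{J}\bm{P}_N\bm{\eps}$ and $\rho_i=(\bm{w}-\bm{w}_i)^T\bm{\Sigma}(\bm{w}-\bm{w}_i)$ (or the cruder $2q_\infty+2\rho_i$), which is looser than the paper's claimed bound but still decreasing, because $\rho_i$ decreases and $x\mapsto(\sqrt{q_\infty}+\sqrt{x})^2$ is increasing; so the lemma's conclusion survives with a different constant. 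A secondary improvement: you derive the projector property directly from $\bm{W}=\bm{\Sigma}^{1/2}\bm{X}^\star$, never invoking the paper's identity $\bm{X}^\star(\bm{X}^{\star T}\bm{\Sigma}\bm{X}^\star)^{-1}\bm{X}^{\star T}=\bm{X}(\bm{X}^T\bm{\Sigma}\bm{X})^{-1}\bm{X}^T$; since the column space of $\bm{X}^\star$ is $\range(\bm{D}^{-1}\bm{X})$ rather than $\range(\bm{X})$, that identity is not valid for general $\bm{D}$, and bypassing it is not merely cosmetic. The one blemish you share with the paper is inferring strict decrease of $\zeta_i$ from \eqref{eq:wOptim2}, which is only an upper envelope on the error, whereas \eqref{eq:wOptim} guarantees non-increase; but on the substantive cross-term step, your proof repairs the paper's.
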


Notice that, the latter result does not have a uniform nature. Indeed,
the sequence $\zeta_1 > \cdots > \zeta_{i+1}$, that bounds the
convergence profile of the estimation error
$\hat{\bm{z}}_i - \bm{\beta}$, depends ultimately on the observations
vector $y$. Then, different samples may have different convergence
profiles. However, assuming a symmetric distribution for the errors
allows to prove an uniform result on the convergence of the errors.

\begin{lemma}\label{t:covz}
  Let $\bm{\Omega}_i=\cov(\bm{w}_i-\bm{w})$, if $\bm{\eps}$ is
  symmetrically distributed and $\bm{w}_1= \bm{0}$, then
  \begin{align}\label{eq:covwDecreasing}
    \tr(\bm{\Sigma} \bm{\Omega}_{i+1} ) \leq \tr(\bm{\Sigma} \bm{\Omega}_i)
  \end{align}
  and
  \begin{align}\label{eq:covzBounds}
    \tr\!\big( (\bm{X}^{\star T}\bm{\Sigma} \bm{X}^{\star}) \cov(\hat{\bm{z}}_i)\big)
    \leq
    \tr( \bm{J} \bm{P}_N \bm{\Sigma} \bm{P}_N^T )
    +
    \tr( \bm{\Sigma} \bm{\Omega}_i  ),
  \end{align}
  where $\bm{J} = \bm{X}(\bm{X}^T\bm{\Sigma} \bm{X})^{-1}\bm{X}^T$.
  \begin{proof}
    When $\bm{\eps}$ is symmetrically distributed and
    $\bm{w}_1=\bm{0}$, Theorem \ref{t:glmPcgUnbiased} implies that
    $\cov(\bm{w}_i-\bm{w}) =
    \E[(\bm{w}_i-\bm{w})(\bm{w}_i-\bm{w})^T]$.
    The first result follows by taking the appropriate expectation
    from \eqref{eq:wOptim}.  The second result can be obtained by
    taking the expectation of \eqref{eq:zConvBounds}.
  \end{proof}
\end{lemma}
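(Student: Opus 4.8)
The plan is to obtain both inequalities by taking expectations of the per-realization bounds \eqref{eq:wOptim} and \eqref{eq:zConvBounds} already in hand; the single substantive ingredient is that, under the stated hypotheses, every relevant iterate is centred, so that raw second moments coincide with covariances. Concretely, since $\bm\eps$ is symmetrically distributed and $\bm{w}_1=\bm{0}$, Theorem \ref{t:glmPcgUnbiased} gives $\E[\bm{w}_i]=\bm{0}$ and $\E[\hat{\bm{z}}_i]=\bm\beta$, while $\bm{w}=\bm{Q}_N(\bm{Q}_N^T\bm\Sigma\bm{Q}_N)^{-1}\bm{Q}_N^T\bm\eps$ is an odd function of $\bm\eps$ and hence $\E[\bm{w}]=\bm{0}$. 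Therefore $\E[\bm{w}_i-\bm{w}]=\bm{0}$ and $\E[\hat{\bm{z}}_i-\bm\beta]=\bm{0}$, so that
\begin{align*}
  \bm{\Omega}_i &= \E\big[(\bm{w}_i-\bm{w})(\bm{w}_i-\bm{w})^T\big],
  &
  \cov(\hat{\bm{z}}_i) &= \E\big[(\hat{\bm{z}}_i-\bm\beta)(\hat{\bm{z}}_i-\bm\beta)^T\big].
\end{align*}
Throughout I would use $\bm{a}^T\bm{A}\bm{a}=\tr(\bm{A}\,\bm{a}\bm{a}^T)$, so that $\E[\bm{u}^T\bm{A}\bm{u}]=\tr(\bm{A}\cov(\bm{u}))$ for a centred $\bm{u}$, together with $\E[\bm\eps\bm\eps^T]=\bm\Sigma$ and the cyclic invariance of the trace.

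\emph{First inequality.} The bound \eqref{eq:wOptim} is the deterministic monotonicity of Property \ref{t:optim}, valid for each realization of $\bm\eps$. Writing each side as $(\bm{w}_j-\bm{w})^T\bm\Sigma(\bm{w}_j-\bm{w})=\tr\big(\bm\Sigma(\bm{w}_j-\bm{w})(\bm{w}_j-\bm{w})^T\big)$ and taking expectations, linearity of expectation and trace together with the identification of $\bm{\Omega}_i$ above give $\tr(\bm\Sigma\bm{\Omega}_{i+1})\le\tr(\bm\Sigma\bm{\Omega}_i)$, which is \eqref{eq:covwDecreasing}.

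\emph{Second inequality.} I would take expectations of the per-realization bound $q\le\zeta_i$ of \eqref{eq:zConvBounds}. Its left-hand side is the expectation of the weighted squared error of Lemma \ref{t:glmPCGconv}, which by the centred-quadratic-form identity equals $\tr\big((\bm{X}^{\star T}\bm\Sigma\bm{X}^\star)\cov(\hat{\bm{z}}_i)\big)$, the left-hand side of \eqref{eq:covzBounds}. On the right-hand side the first summand of $\zeta_i$ has expectation $\E[\bm\eps^T\bm{P}_N^T\bm{J}\bm{P}_N\bm\eps]=\tr(\bm{P}_N^T\bm{J}\bm{P}_N\,\E[\bm\eps\bm\eps^T])=\tr(\bm{J}\bm{P}_N\bm\Sigma\bm{P}_N^T)$ after using $\E[\bm\eps\bm\eps^T]=\bm\Sigma$ and cycling, while the second summand $(\bm{w}_i-\bm{w})^T\bm\Sigma(\bm{w}_i-\bm{w})$ has expectation $\tr(\bm\Sigma\bm{\Omega}_i)$ as in the first part. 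Summing and invoking $q\le\zeta_i$ gives \eqref{eq:covzBounds}.

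\emph{Main obstacle.} The computation is routine once the iterates are centred, so the crux is the preliminary step. Without the symmetry of $\bm\eps$ and the choice $\bm{w}_1=\bm{0}$, the means of $\bm{w}_i-\bm{w}$ and $\hat{\bm{z}}_i-\bm\beta$ would not vanish, the raw second moments would differ from $\bm{\Omega}_i$ and $\cov(\hat{\bm{z}}_i)$ by outer products of those means, and the clean trace inequalities would acquire extra terms. Establishing centredness --- via Theorem \ref{t:glmPcgUnbiased} for $\bm{w}_i$ and $\hat{\bm{z}}_i$ and the oddness of $\bm{w}$ in $\bm\eps$ --- is therefore the one point needing care; after that, both inequalities follow directly by taking expectations of \eqref{eq:wOptim} and \eqref{eq:zConvBounds}.
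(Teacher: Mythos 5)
Your proof is correct and follows the paper's own argument exactly: centredness of $\bm{w}_i-\bm{w}$ and $\hat{\bm{z}}_i-\bm{\beta}$ via Theorem \ref{t:glmPcgUnbiased}, then expectations of the per-realization bounds \eqref{eq:wOptim} and \eqref{eq:zConvBounds} using $\E[\bm{u}^T\bm{A}\bm{u}]=\tr\big(\bm{A}\cov(\bm{u})\big)$ and $\E[\bm{\eps}\bm{\eps}^T]=\bm{\Sigma}$ --- you merely spell out the steps the paper leaves implicit. One caveat you inherit from the paper itself: since $q$ in \eqref{eq:zConvBounds} is weighted by $(\bm{X}^{\star T}\bm{\Sigma}\bm{X}^{\star})^{-1}$, its expectation is $\tr\big((\bm{X}^{\star T}\bm{\Sigma}\bm{X}^{\star})^{-1}\cov(\hat{\bm{z}}_i)\big)$, so the weight matrix in \eqref{eq:covzBounds} (and in your transcription of it) should carry that inverse for the two displays to be consistent.
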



\medskip

A Monte Carlo experiment have been designed to tests these results.
In this numerical experiment the performances of the PCG-Aug method and
of the PCG-NE method are compared. Here, PCG-NE refers to a classical
conjugate gradient method applied to the normal equations
$(\bm{X}^T\bm\Sigma^{-1}\bm{X}) \bm\beta = \bm{X}^T\bm\Sigma^{-1}
\bm{y}$.
The setup is the following: $m=80$, $n=20$ and the number of MC
replications is 1000. The preconditioner is $\bm{K}$ in
\eqref{eq:invprec} with $\bm{D} = \bm{I}$. The matrices $\bm{X}$ and
$\bm\Sigma$ are randomly generated, but kept fixed for the whole
experiment. The regressor matrix is generated as in Section
\ref{sec:conv} and $\bm\Sigma$ has four distinct eigenvalues, 0.01,
0.1, 10 and 50, each one with multiplicity 50. The resulting
preconditioned coefficient matrix $\bm{KG}$ has condition number equal
to 4.95e+03 and its spectrum is shown in Figure \ref{fig:eig2}.
\begin{figure}[h]
  \centering

  \includegraphics*[scale=.5]{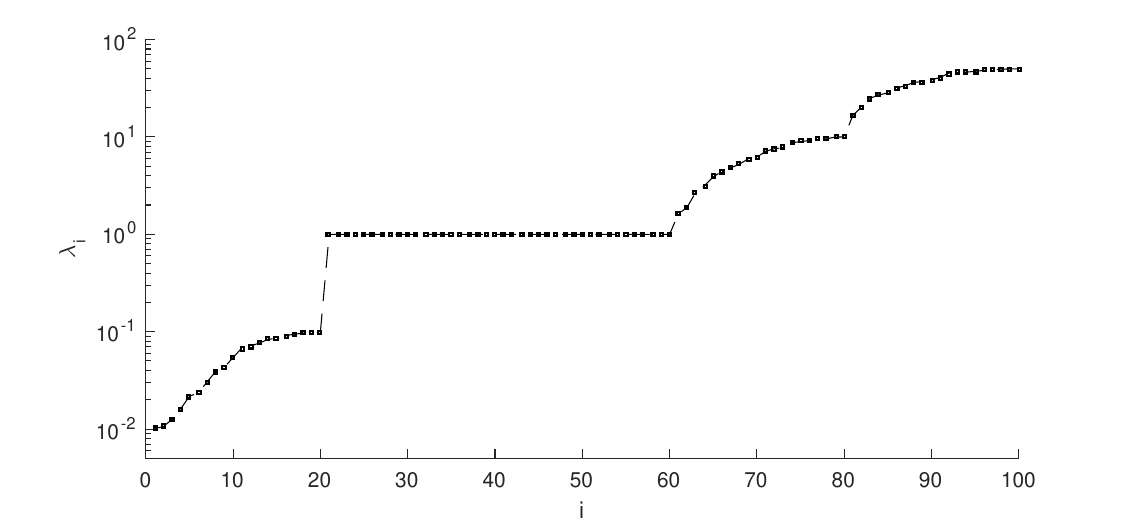}

  \caption{\small Spectrum of $\bm{KG}$. }
  \label{fig:eig2}
\end{figure}

Accordingly to \eqref{eq:glm}, in each MC replication
the observation vector $\bm{y}$ is drawn from multivariate normal
distribution with covariance matrix $\bm{\Sigma}$ and mean
$\bm{X}\bm\beta$. Figure \ref{fig:conv1} and \ref{fig:conv3} report,
respectively, elementwise and uniform results.
More precisely, Figure \ref{fig:conv1} shows results on the MC
distribution of $\beta_2$ for the PCG-Aug method (Figures
\ref{fig:conv1a} and \ref{fig:conv1c}) and for the PCG-NE method.
Figure \ref{fig:conv1a} clearly confirm part of Theorem
\ref{t:glmPcgUnbiased}. Figure \ref{fig:conv1c} and \ref{fig:conv1d}
show the superior performances of PCG-Aug both for the 1\% and 5\%
tails and for the average case. Moreover, differently than PCG-NE
which start with a very low-variance estimator and then update it
monotonically reducing the bias and increasing the variance, the
PCG-Aug keeps an unbiased estimator throughout the iterations, but
does not show a monotone behaviour.
These conclusions are confirmed looking at Figure \ref{fig:conv3}
which shows the MC bands and average for RMSE of the estimators for
$\bm\beta$.

\begin{figure}[h]
  \centering

  \subfigure[]{\label{fig:conv1a}\includegraphics*[scale=.5,trim=4 0 20 0]{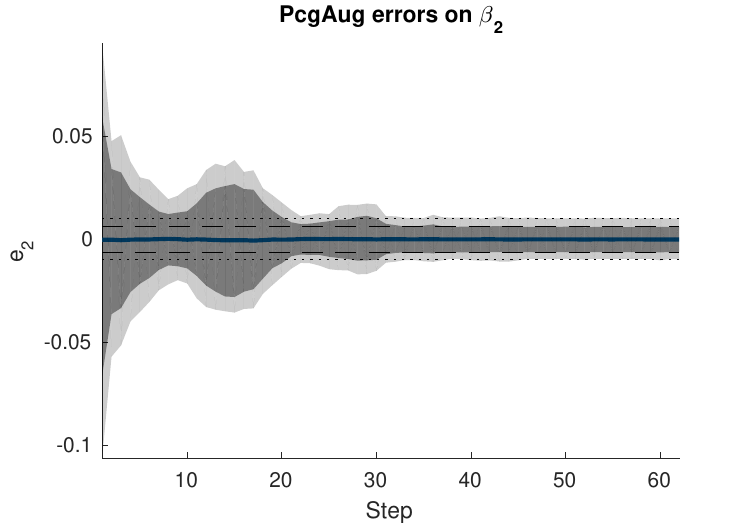}}
  \subfigure[]{\label{fig:conv1b}\includegraphics*[scale=.5,trim=4 0 20 0]{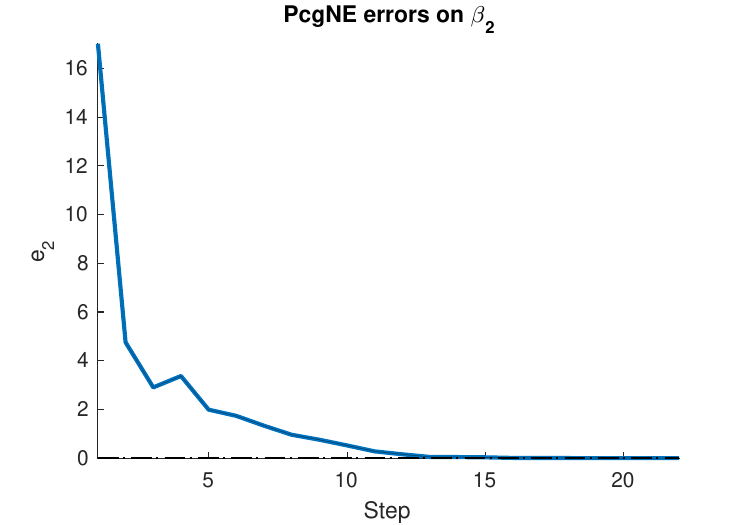}}
  
  \subfigure[]{\label{fig:conv1c}\includegraphics*[scale=.5,trim=4 0 20 0]{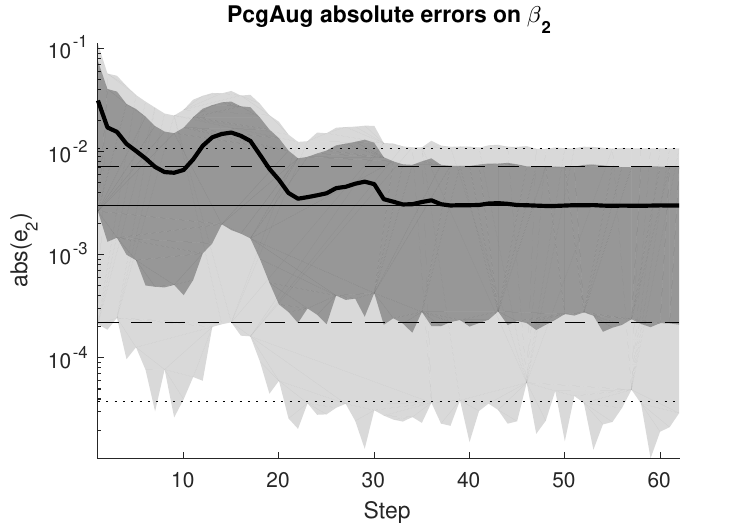}}
  \subfigure[]{\label{fig:conv1d}\includegraphics*[scale=.5,trim=4 0 20 0]{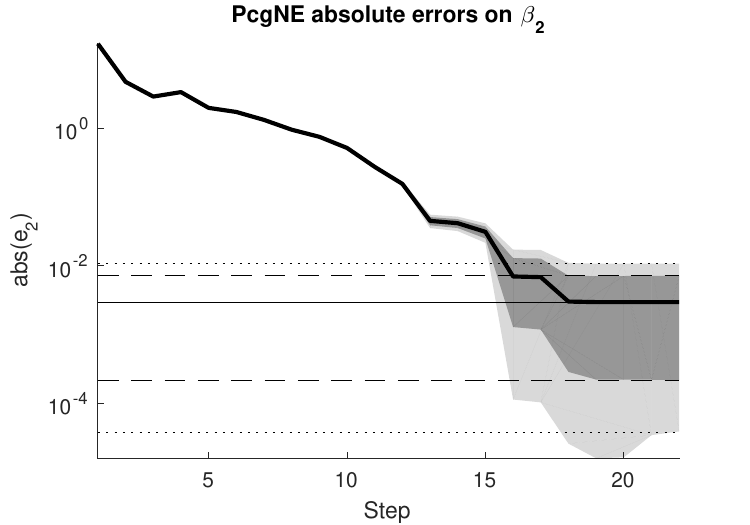}}

  \caption{\small Montecarlo average and 95\% and 99\% bands for the
    error on the second element of the PCG-Aug and PCG-NE estimators
    for $\bm\beta$. 
    Analogous statistics for the GLS estimator are shown as horizontal
    continous, dashed and dotted lines. }
  \label{fig:conv1}
\end{figure}

\begin{figure}[h]
  \centering

  \includegraphics*[scale=.5,trim=4 0 20 0]{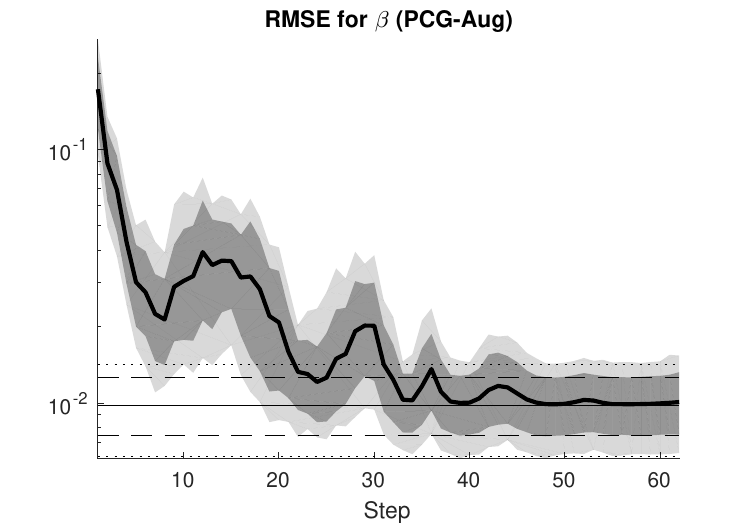}
  \hfill
  \includegraphics*[scale=.5,trim=4 0 20 0]{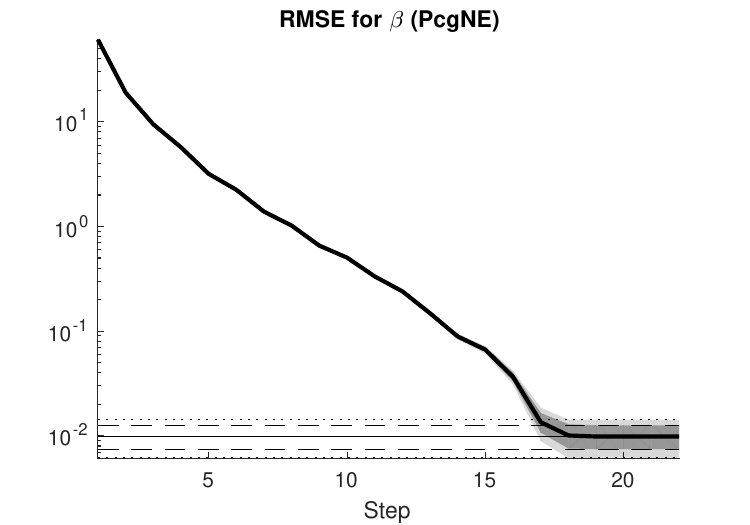}
  
  \caption{\small %
    Montecarlo average and 95\% and 99\% bands for the RMSE error
    $n^{-\frac12} \| \hat{\bm{z}}_i - \bm\beta
    \|_{(\bm{X}^T\bm\Sigma^{-1}\bm{X})}$
    of the PCG-Aug and PCG-NE estimators. }
  \label{fig:conv3}
\end{figure}

\section{Applications}
\label{sec:apps}

In this section the PCG-Aug method is adapted to the GLS estimation of
GLMs with specific structures.

\subsection{GLMs with linear restrictions on the parameters}
\label{sec:RGLM}

Consider a GLM where a set of $k$ linear restrictions are imposed on
the parameters $\bm{\beta}$:
\begin{align}\label{eq:RGLM}
  \bm{\zeta} &= \bm{Z} \bm{\beta} + \bm{\eps},
  &
  \bm{C} \bm{\beta} &= \bm{\gamma},
  &
  \bm{\eps} &\sim (\bm{0}, \bm{\Omega}),
\end{align}
where $\bm{Z} \in \Real^{m \times n}$,
$\bm{C} \in \Real^{k \times n}$ and 
$\bm{\gamma} \in \Real^{k}$ are fixed.

Often, these constraints consists on fixing some elements of
$\bm{\beta}$ to be null. In that case $\bm{C}$ is a selection matrix,
that is a matrix whose row are a subset of the rows of the identity
matrix and the rhs is null, $\bm{\gamma}=\bm{0}$. It turns out that
$\bm{C}^T$ is semi-orthogonal, that is $\bm{CC}^T=\bm{I}_k$ and applying
$\bm{C}$ is equivalent to selecting the constrained
elements. Analogously, the application of the diagonal matrices
$\bm{C}^T\bm{C}$ or $\bm{I}-\bm{C}^T\bm{C}$ has the effect of
annihilating the restricted or the unrestricted elements,
respectively.

The restricted GLM \eqref{eq:RGLM} can be seen as a GLM with $m+k$
observations. The additional $k$ observation corresponds to the
constraints and have a disturbance term with zero variance. More
precisely, the restricted model \eqref{eq:RGLM} is equivalent to the
GLM \eqref{eq:glm} with
\begin{align}\label{eq:RGLM1setup}
  \bm{\Sigma} &= \pmx{\bm{\Omega} & \bm{0} \\ \bm{0} & \bm{0}},
  &
  \bm{X} &= \pmx{ \bm{Z} \\ \bm{C} },
  &
  \bm{y} &= \pmx{ \bm{\zeta} \\ \bm{\gamma} }.
\end{align}
The GLS estimator of that GLM can be computed using Algorithms
\ref{alg:glmpcg2} and \ref{alg:glmpcg3} to solve the augmented system 
with the coefficient matrix and the RHS vector, respectively, given by
\begin{align}\label{eq:GRGLM}
  \bm{G} &= 
  \begin{blockarray}{cccl}
    m & k & n & \\
    \begin{block}{(ccc)l}
      \bm{\Omega} & \bm{0}   & \bm{Z} & m \\
      \bm{0}      & \bm{0}   & \bm{C} & k \\
      \bm{Z}^T    & \bm{C}^T & \bm{0} & n \\
    \end{block}
  \end{blockarray}
  &\text{and}&&
  \bm{h} &= \pmx{ \bm{\zeta} \\ \bm{\gamma} \\ \bm{0}}.
\end{align}
A convenient choice for the top-right block of the preconditioner matrix
in \eqref{eq:invprec} is 
\begin{align}
  \label{eq:3}
  \bm{D} = \pmx{ \bm{D}_Z & \bm{0} \\ \bm{0} & \bm{D}_C },
\end{align}
where $\bm{D}_Z \in \Real^{m \times m}$ and $\bm{D}_C \in \Real^{k \times k}$
are arbitrary symmetric and positive definite auxiliary matrices.  In
exact precision and absence of breakdowns, the maximum number of steps
required by the PCG method is $m+k-n+1$.

To use Algorithms \ref{alg:glmpcg} and \ref{alg:glmpcg2} it is
necessary to apply $\bm{X}^\star$ and $\bm{\Pi}$ to a vector. From \eqref{eq:Xstar}
and \eqref{eq:RGLM1setup} it follows that those matrices are given by
\begin{subequations}\label{eq:rglm1m}
\begin{align}
  \bm{X}^\star &= 
  \pmx{\bm{D}_Z^{-1}\bm{Z} \\ \bm{D}_C^{-1}\bm{C}} 
  (\bm{Z}^T\bm{D}_Z^{-1}\bm{Z} + \bm{C}^T\bm{D}_C^{-1}\bm{C})^{-1}
\end{align}
and
\begin{align}
  \label{eq:rglm1mB}
  \bm{\Pi} &=
  \pmx{\bm{D}_Z^{-1} & \bm{0} \\ \bm{0} & \bm{D}_C^{-1}} 
  - 
  \pmx{\bm{D}_Z^{-1}\bm{Z} \\ \bm{D}_C^{-1} \bm{C}}
  (\bm{Z}^T\bm{D}_Z^{-1}\bm{Z} + \bm{C}^T \bm{D}_C^{-1} \bm{C})^{-1}
  \pmx{\bm{Z}^T\bm{D}_Z^{-1} & \bm{C}^T \bm{D}_C^{-1}}.
\end{align}
\end{subequations}
\medskip

Now, in order to provide some insight on the behaviour of this
iterative estimation method, let consider the case where $\bm{C}$ is a
selection matrix and both $\bm{D}_Z$ and $\bm{D}_C$ are identity
matrices. In that case, 
\begin{align*}
  \bm{X}^\star &= 
  \pmx{\bm{Z} \\ \bm{C}} (\bm{Z}^T\bm{Z} + \bm{C}^T\bm{C})^{-1},
\end{align*}
and applying $\bm{K}$ to a vector of residuals
$\bm{f} = (\bm{r}^T \; \bm{0}^T)^T$ corresponds to a shinkage
regression of $\bm{r}$ against $\bm{Z}$:
\begin{align*}
  \bm{X}^{\star T}\bm{f} = (\bm{Z}^T\bm{Z} +
  \bm{C}^T\bm{C})^{-1}\bm{Z}^T\bm{r}.  
\end{align*}
Indeed, recall that $\bm{C}^T\bm{C}$ is a diagonal matrix with unit
elements in correspondence of the restrictions and zero elsewhere.  In
some sense, at each step of PCG method, the application of the
preconditioner shrinks the current estimator toward the manifold
defined by the constraints. Other approaches usually project the
estimator into that subspace.

\subsection{Multivariate linear models with parameters restrictions}
\label{sec:MVRGLM1}

The multivariate GLM is specified as follows,
\begin{align}\label{eq:MvGLM}
  \bm{Y} &= \bm{Z}_0 \bm{B} + \bm{U},
\end{align}
where $\bm{Y}, \bm{U} \in \Real^{M \times G}$ are, respectively, the
response and disturbance matrices, $\bm{Z}_0 \in \Real^{M \times N}$
is a full column rank regressor matrix and
$\bm{B} \in \Real^{N \times G}$ is the matrix of parameters. The rows
of $\bm{U}$ are iid with zero mean and covariance matrix
$\bm{\Omega}_0 \in \Real^{G \times G}$, that is
$\VEC(\bm{U}) \sim (\bm{0},\bm{\Omega}_0 \otimes \bm{I}_M)$.

For the multivariate model \eqref{eq:MvGLM}, OLS and GLS estimations
give the same estimator:
$\hat{\bm{B}} = (\bm{Z}_0^T\bm{Z}_0)^{-1}\bm{Z}_0^T \bm{Y}$.  That
equivalence is broken when linear restrictions are imposed on the
elements of $\bm{B}$ \cite{MagnusNeudecker:book}. Often, those can
simply be exclusion restriction of the kind $b_{ij} = 0$ for some set
of couples $(i,j)\in S$.
The restricted multivariate model can be written in the form of
\eqref{eq:RGLM} by setting $\bm{Z} = \bm{I}_G \otimes \bm{Z}_0$,
$\bm{\beta} = \VEC(\bm{B})$, $\bm{\eps} = \VEC(\bm{U})$,
$\bm{\Omega} = \bm{\Omega}_0 \otimes \bm{I}_M$ and collecting all the
restriction coefficients on the matrix $\bm{C}$.  The total number of
observations and regressors are $m = GM$ and $n=GN$. As above, $k$
will denote the total number of restrictions.

\subsubsection{Model reduction}
Since $\bm{Z}_0$ has been assumed with full column rank, the model can
be reduced by means of a preliminar transformation. To this end
consider the QRD $\bm{Z}_0 = \bm{Q}_0 \bm{R}_0$, where
$\bm{R}_0 \in \Real^{N \times N}$ is triangular and non-singular and
$\bm{Q}_0 \in \Real^{M \times N}$ semi-orthogonal, that is
$\bm{Q}_0^T\bm{Q}_0 = \bm{I}_N$. Then, premultiplying \eqref{eq:MvGLM}
by $\bm{Q}_0^T$ it gives
\begin{align*}
  \bm{Y}_0 &= \bm{R}_0 \bm{B} + \bm{U}_0,
  &
  \VEC(\bm{U}_0) &\sim (\bm{0}, \bm{\Omega}_0 \otimes \bm{I}_N),
\end{align*}
where $\bm{Y}_0 = \bm{Q}_0^T \bm{Y}$ and $\bm{U}_0 = \bm{Q}_0^T \bm{U}$.
The reduced restricted model can then be written in the form
\eqref{eq:RGLM} where
\begin{align*}
  \bm{Z} &= \bm{I}_G \otimes \bm{R}_0,
  &
  \bm{\beta} &= \VEC(\bm{B}),
  &
  \bm{\eps} &= \VEC(\bm{U}_0),
  &
  \bm{\Omega} &= \bm{\Omega}_0 \otimes \bm{I}_N.
\end{align*}
The total number of equation of the model is reduced from $m=GM$ to $m=GN$.

\subsubsection{Adaption of the PCG-Aug method}
Using the approach given in Section \ref{sec:RGLM} leads to a PCG-Aug
method that requires $m+k-n+1 = GN+k-GN+1 =k+1$ steps.  However, each
step requires the computation of $\bm{X}^\star$ and $\bm{\Pi}$. A task
that can be computationally expensive because it requires the
inversion of the matrix
\begin{align*}
  \pmx{\bm{Z} \\ \bm{C}}^T \pmx{\bm{Z} \\ \bm{C}}
  = \bm{I} \otimes \bm{R}_0^T\bm{R}_0 + \bm{C}^T\bm{C}.
\end{align*}
In absence of cross equation restrictions, that is when
$\bm{C} = \oplus_i \bm{C}_i$, $\bm{C}_i \in \Real^{k_i \times N}$,
that computation can be efficiently performed.  In that case, indeed,
that that $GN \times GN$ matrix is block diagonal and computing its
inverse reduces to the inversion of the $N \times N$ matrices
$\bm{R}_0^T \bm{R}_0 + \bm{C}_i^T\bm{C}_i$, $i=1,\ldots,G$.  This task
can be computed by means of the QRD of the set of matrices
\begin{align*}
  \pmx{\bm{R}_0 \\ \bm{C}_i}, && i &= 1,\ldots, G.
\end{align*}
More precisely, $\bm{X}^\star$ is given by
\begin{align*}
  \bm{X}^\star = 
  \pmx{
    \oplus_i \bm{R}_0 (\bm{R}_0^T\bm{R}_0+\bm{C}_i^T\bm{C}_i)^{-1} \\ 
    \oplus_i \bm{C}_i (\bm{R}_0^T\bm{R}_0+\bm{C}_i^T\bm{C}_i)^{-1} 
  },
\end{align*}
which, after a permutation of the rows, can be written as the block
diagonal matrix
\begin{align*}
  \tilde{\bm{X}}^\star = 
  \oplus_i 
  \pmx{\bm{R}_0 \\ \bm{C}_i}
  (\bm{R}_0^T \bm{R}_0+\bm{C}_i^T\bm{C}_i)^{-1}.
\end{align*}
Now, for each block of that matrix consider the Updating QRD
\begin{align}\label{eq:MvRGLMQRi}
  \pmx{\bm{R}_0 \\ \bm{C}_i} &= \tilde{\bm{Q}}_i \bm{R}_i, 
\end{align}
where 
$\bm{R}_i \in \Real^{N \times N}$,
$\tilde{\bm{Q}}_i \in \Real^{(N+k_i) \times N}$
and
$\tilde{\bm{Q}}_i^T \tilde{\bm{Q}}_i = \bm{I}_N$ 
\cite{Golub:book,ricos:aice_monog,ricos:simax00,ricos:rank-k}.
It turns out that each block of $\tilde{\bm{X}}^\star$ can be written as
\begin{align*}
  \pmx{\bm{R}_0 \\ \bm{C}_i}(\bm{R}_0^T \bm{R}_0+\bm{C}_i^T\bm{C}_i)^{-1}
  =
  \tilde{\bm{Q}}_i \bm{R}_i^{-T}.
\end{align*}
Analogously, $\bm{\Pi}$ in \eqref{eq:rglm1mB} is equal, modulo a
permutation, to the block diagonal matrix
\begin{align*}
  \bm{I} - \oplus_i 
  \pmx{\bm{R}_0 \\ \bm{C}_i}
  (\bm{R}_0^T \bm{R}_0+\bm{C}_i^T\bm{C}_i)^{-1} 
  \pmx{\bm{R}_0^T & \bm{C}_i^T}
  =
  \oplus_i (\bm{I} - \tilde{\bm{Q}}_i \tilde{\bm{Q}}_i^T).
\end{align*}
This set of QRDs need to be computed only once. Then, each step of the
PCG-Aug method requires the application of $\bm{R}_i^{-T}$, $\bm{Q}_i$ and
$\bm{Q}_i^T$ to some vector (for $i=1,\ldots,G$).

A summary of the computational cost of the main steps of the resulting
algorithm is reported in Table \ref{tab:MvGLM1Compl}, where the
following majorization have been used 
\begin{align*}
  \sum_{i=1}^G (N+k_i)^2
  &\leq
  G\Big( (N + \bar{k})^2
  +
  (k_{\max} - k_{\min})^2 
  \Big).
\end{align*}
\begin{table}[h]
  \small
  \centering
  \begin{tabular}{ll@{\,$\times$\,}l}
    \hline \\[-2.3ex]
    \multicolumn{1}{l}{Task} 
     & \multicolumn{1}{l}{Compl. complexity} 
     & Nr. of Iterations
    \\[.3ex]
    \hline \\[-2.3ex]
    QRD of $\bm{Z}_0$ & $M^2N$ & $1$
    \\[.3ex]
    QRDs of $\pmx{\bm{R}_0 \\ \bm{C}_i}$, $\forall i$,
      & $\sum_{i=1}^G (N + k_i)^2N $ & $1$ 
    \\[.3ex]
    Apply $\pmx{\bm{R}_0 \\ \bm{C}_i }$ to some vector, $\forall i$ 
      & $GN(N+\bar{k})$ & $G \bar{k}$
    \\[.3ex]
    Apply $\pmx{\bm{R}_0 \\ \bm{C}_i}^{*T}$ to some vector $\forall i$ 
      & $GN(N+\bar{k})$ & $G \bar{k}$
    \\[.3ex]
    Apply $\bm\Pi$  
      & $GN(N+\bar{k})$ & $G \bar{k}$
    \\[.3ex]
    \hline \\[-2.3ex]
    Total complexity
      & \multicolumn{2}{l}{
        $
        M^2N + \sum_{i=1}^G (N + k_i)^2N + 
        G^2 N^2 \bar{k} + G^2 N \bar{k}^2
        $        
        }
    \\[.3ex]
      & \multicolumn{2}{l}{
        $M^2N + GN(N + \bar{k})^2$        
        }
    \\
    \hline
  \end{tabular}
  \medskip

  \caption{%
    Computational cost of the method presented in Section 
    \ref{sec:MVRGLM1} for estimating the RGLM \eqref{eq:MvGLM}. 
  }
  \label{tab:MvGLM1Compl}
\end{table}

\subsection{Seemingly unrelated regressions model}
\label{sec:SUR}

The Seemingly unrelated regressions (SUR) model is a GLM where the
response vector and the regressor and covariance matrices hava the
following structure
\begin{align}\label{eq:SURdefs}
  \bm{y} &= \VEC(\bm{Y}),
  &
  \bm{X} &= \oplus_{i=1}^G \bm{X}_i
  &\text{and}&&
  \bm{\Sigma} &= \bm{\Sigma}_0 \otimes \bm{I}_M,
\end{align}
with $\bm{Y} \in \Real^{M \times G}$, 
$\bm{X}_i \in \Real^{M \times N_i}$,
$\bm{\Sigma}_0 \in \Real^{M \times M}$ 
\cite{FoschiBelsleyKontoghiorghes:03,MagnusNeudecker:book,Srivastava:book}.
Estimating the SUR model by a straightforward implementation of the
GLS estimator is expensive for large models, that is when both $G$ and
$M$ are big. Indeed, given the complementarity in the structure of
$\bm{X}$ and $\bm\Sigma$, computing the cross products in
\eqref{eq:GLS} leads to full matrices.  Specific factorization methods
have been designed in the past to exploit the sparsity structure of
these models
\cite{FoschiBelsleyKontoghiorghes:03,Foschi:csda02,FoschiKontoghiorghes:03,%
  FoschiKontoghiorghes:03b,ricos:jcsda:95,ricos:aice_monog}.  The
PCG-Aug method here proposed provides a valid alternative approach.

In order to estimate the SUR model by means of the PCG-Aug method,
consider the simplest choice for the preconditioner $\bm{D} = \bm{I}_{MG}$.  In
that case the preconditioner matrix $\bm{K}$ in \eqref{eq:prec} can be
explicitly computed by $\bm{X}^\star = \oplus_i \bm{X}_i^\star$ and
$\bm{\Pi} = \oplus_i \bm{\Pi}_i$ with
\begin{align*}
  \bm{X}_i^\star &= \bm{X}_i (\bm{X}_i^T\bm{X}_i)^{-1},
  &\text{and}&&
  \bm{\Pi}_i &= \bm{I}_M - \bm{X}_i (\bm{X}_i^T\bm{X}_i)^{-1}\bm{X}_i^T.
\end{align*}
Note that both $\bm{X}$ and $\bm{\Pi}$ are block diagonal matrices.
An alternative, and numerically more stable, approach for applying $\bm{K}$
derives from QR decompositions of each regressor $\bm{X}_i$:
$\bm{X}_i = \bm{Q}_i\bm{R}_i$, $i = 1,\ldots,G$, where
$\bm{R}_i \in \Real^{N_i \times N_i}$ is upper triangular, 
and $\bm{Q}_i \in \Real^{M \times N_i}$ orthogonal, that is
$\bm{Q}_i^T \bm{Q}_i = \bm{I}_{N_i}$. Then,
\begin{align*}
  \bm{X}_i^\star &= \bm{Q}_i \bm{R}_i^{-T}
  &\text{and}&&
  \bm{\Pi}_i &= \bm{I}_M - \bm{Q}_i \bm{Q}_i^{T}.
\end{align*}
Furthermore, setting $\bm{\xi} =\VEC(\{ \bm{\xi}_i \}_{i=1}^G)$,
the computation of $\bm{\Pi} \bm{\xi}$ and $\bm{X}^{\star T}\bm{\xi}$ required in
steps 8 and 12 of Algorithm \ref{alg:glmpcg2} reduces to 
\begin{align*}
  \bm{\Pi} \bm{\xi}
  &=  (\oplus_i \bm{\Pi}_i) \VEC(\{ \bm{\xi}_i \}_{i=1}^G)
  = \VEC(\{ \bm{\xi}_i - \bm{Q}_i\bm{Q}_i^T\bm\xi_i \}_{i=1}^G)
  \intertext{and}
  \bm{X}^{\star T} \bm{\xi}
  &= (\oplus_i \bm{X}_i^{*})^T
  = \VEC(\{ \bm{R}_i^{-1} \bm{Q}_i^T \bm{\xi}_i\}_{i=1}^G).
\end{align*}
Note that, the $i$-th block of $\bm{X}^{*T} \bm\xi$ is the OLS estimator of a
model with regressor $\bm{X}_i$ and response $\bm{\xi}_i$. Moreover, the $i$-th
block of $\bm{\Pi} \bm{\xi}$ is given by the corresponding residual vector.

In step 2 and 7 of Algorithm \ref{alg:glmpcg2}, the product $\bm{Xv}$
is a set matrix-vector products involving $\bm{X}_i, i=1,\ldots,G$,
and the product
$\bm{\Sigma} \bm{u} = (\bar{\bm\Sigma} \otimes \bm{I}_M)\bm{u}$ which
appears in steps 2, 5 and 7, reduces to the matrix product
$\bm{U}\bar{\bm\Sigma}$, where $\bm{U}\in\Real^{M \times G}$ is such that
$\bm{u} = \VEC(\bm{U})$.

Resuming, each iteration of the PCG-Aug method requires the OLS
estimation of $G$ independent linear models, each having $M$
observations. Note that, since the data matrices are fixed, they need
to be factorised only once.  In absence of breakdowns the method
terminates in at most $m-n = G(M - \overline{k})$ iterations, with
$\overline{k} = \frac1G \sum_{i=1}^G k_i$.
The main computational complexity of the dominating tasks are
reported in Table \ref{tab:SURcompl}.
\begin{table}[h]
  \small
  \centering
  \begin{tabular}{ll@{\,$\times$\,}l}
    \hline \\[-2.3ex]
    \multicolumn{1}{l}{Task} 
     & \multicolumn{1}{l}{Compl. complexity} 
     & Nr. of Iterations
    \\[.3ex]
    \hline \\[-2.3ex]
    QRDs of $\bm{X}_1,\ldots,\bm{X}_G$ 
      & $M^2G \overline{k}$ & $1$ 
    \\[.3ex]
    $\bm\Sigma \bm{u}$  
      & $MG^2$ & $G (M- \overline{k})$ 
    \\[.3ex]
    $\bm{X} \bm{v}$       
      & $MG \overline{k}$ & $G(M- \overline{k})$
    \\[.3ex]
    $\bm{X}^{*T}\bm\xi$ 
      & $G k_{max}^2 + MG \overline{k}$  & $G(M- \overline{k})$
    \\[.3ex]
    $\bm\Pi \bm\xi$  
      & $M G \overline{k}$ & $G(M- \overline{k})$
    \\[.3ex]
    \hline \\[-2.3ex]
    Total complexity
      & \multicolumn{2}{l}{
        $(MG^3 + G^2 k_{max}^2 + M G^2 \overline{k}) (M- \overline{k})
        + M^2 G \overline{k}$
        }
    \\
    \hline
  \end{tabular}
  \medskip

  \caption{%
    Computational cost of the PCG-Aug method for estimating the SUR
    model. 
  }
  \label{tab:SURcompl}
\end{table}

Under specific assumptions on the dimensions, the above expression
further simplifies. For instance,
\begin{itemize}
\item %
  If $M-\overline{k} = O(1)$ and $O(\overline{k}) = O(k_{max}) = O(M)$
  then the computational complexity is of the order
  $MG( G^2 + M G + M^2)$.
\item %
  If $O(M-G) = O(M)$ and $O(\overline{k}) = O(k_{max}) = O(G)$, then
  the complexity becomes $M^2G^4$.
\end{itemize}
When $\bm\Sigma_0$ is non-singular, the GLS estimator for the SUR
model can be computed by means of the QRD of the matrix
$\bm{A} = (\bm{\Sigma}_0^{-\frac12} \otimes \bm{I}_M) (\oplus_i
\bm{X}_i)$, %
where $\bm\Sigma_0^{-\frac12}$ denotes the inverse of a square root of
$\bm\Sigma_0$ (i.e. Cholesky factor). Since that matrix $\bm{A}$ is a
non-sparse matrix having dimensions $GM \times G\overline{k}$, the
cost of that computation is of the order $M^2 G^3 \overline{k}$.

\begin{remark}\label{rem:reduction}
  Let $q$ be the rank of the matrix
  $\bm{W} = (\bm{X}_1 \, \bm{X}_2 \, \cdots \, \bm{X}_G)$, then by
  means of the QRD of $\bm{W}$ it is possible to transform the SUR
  model specified in \eqref{eq:glm} and \eqref{eq:SURdefs} to an
  equivalent SUR model where the number of observation in each
  equation is $q$ \cite{Belsley:92,FoschiBelsleyKontoghiorghes:03}.
  The computational cost of that reduction is $M^2G \overline{k}$.  In
  the worst case the model is reduced to a model where each equation
  has $G\overline{k}$ observations and the number of iterations in
  that case reduces to $G (G-1) \overline{k}$. The whole procedure
  has, then, a computational complexity of the order
  $ M^2 G \overline{k} + G^5 \overline{k}{}^2 + G^3 k^2_{max}
  \overline{k} + G^4 \overline{k}{}^3.  $
\end{remark}

\subsection{Experimental Tests}

Here the performances of the multivariate RGLM method developed in
Section \ref{sec:MVRGLM1} and PCG-Aug method adaption to the SUR model
developed in section \ref{sec:SUR} are compared. The first approach
will be referred to as \-MVRGLM and the latter simply as PCG-Aug.
Additionally, a PCG method applied to the normal equations (PCG-NE)
will be included in the comparison.  In the first experiment these
methods are applied to the SUR estimation of the Fair's macro
econometric model \cite{fair:book1994}. It should remarked that the
point here is not the proposal of an estimator for that model, but
rather to test the performances of the proposed SUR estimation
procedure on some real economic data.  In the following tests, the
data matrices have been preprocessed to reduce the model dimension by
the technique considered in Remark \ref{rem:reduction}.

The performances of the three methods are reported in Figure
\ref{fig:fair1}. Figures \ref{fig:fair1a} and \ref{fig:fair1b} shows
the RMSE against the iteration number, while in Figure
\ref{fig:fair1c} the convergence is expressed as function of the
execution time. Note that, using Matlab as an experimental platform,
due to a very low precision in measuring execution time and to the
platform overheads, these execution times should be taken as lightly
indicative.
Clearly, Figure \ref{fig:fair1} shows the gain obtained by properly
choosing the preconditioner matrix $\bm{D}$ and the superiority of the
augmented system approaches against the usual normal equation
setup. Others preconditioners have been considered and tested for the
PCG-NE method. Since all of them had worse performances their
convergence have not been included in Figure \ref{fig:fair1}.

\begin{figure}[h]
  \centering

  \subfigure[]{\label{fig:fair1a}\includegraphics*[scale=.5,trim=4 0 20 0]{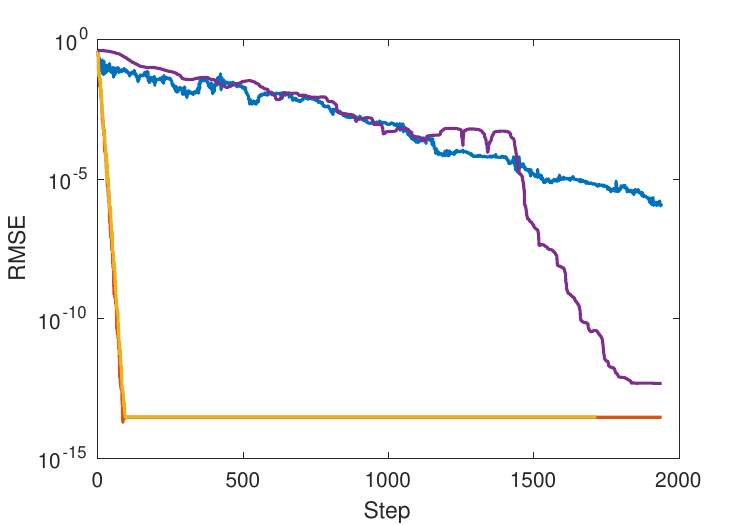}}
  \subfigure[]{\label{fig:fair1b}\includegraphics*[scale=.5,trim=3 0 20 0]{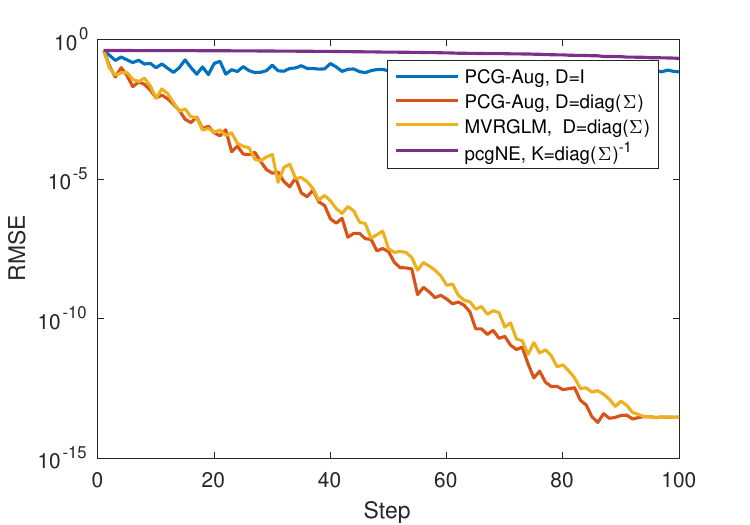}}

  \subfigure[]{\label{fig:fair1c}\includegraphics*[scale=.5,trim=4 0 20 0]{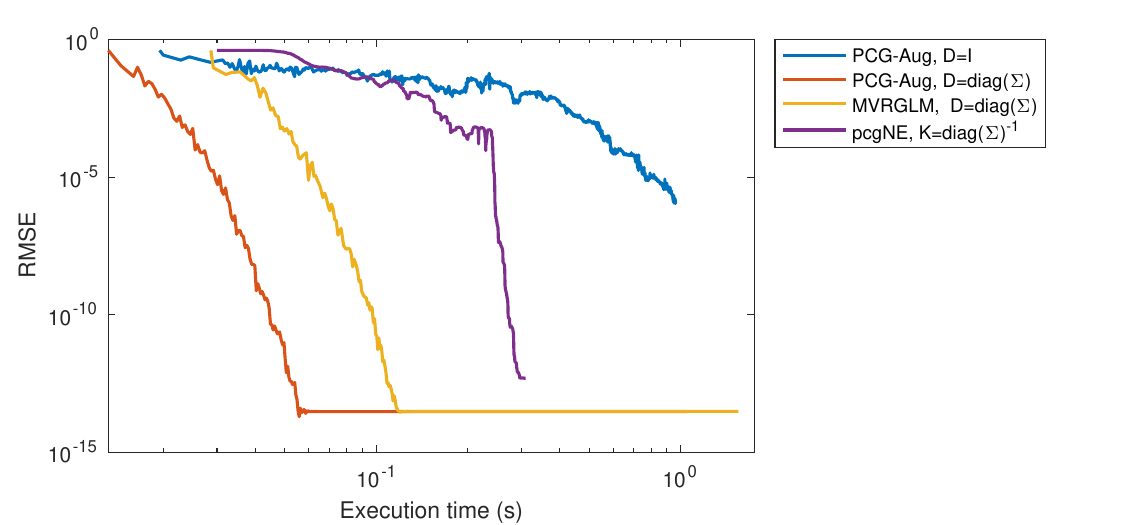}}

  \caption{\small %
    Convergence of the PCG-Aug, PCG-NE and restricted PCG-Aug approaches
    for computing the SUR-GLS estimator of the Fair's model. }
  \label{fig:fair1}

\end{figure}

\bigskip

A second set of tests have been performed for estimating VAR models
with parameter restrictions
\cite{FoschiKontoghiorghes:03b,HamiltonBook,Srivastava:book}. Estimation
of these models reduces to the estimation of a mutivariate RGLM which
can be done by means of MVRGLM, PCG-Aug or PCG-NE methods. Six
different models have been tested. All the models have the same
dimensions $M=300$, $G=12$ and $N=60$ (using the notation of section
\ref{sec:MVRGLM1}).
The rows of $\bm{Z}_0$ follows a VAR(4) model whose largest root is
reported as $\lambda_{max}$ in Table \ref{tab:varStats}.  That Table
reports also the sparsity of the matrix $\bm{B}$ and the condition
number of the different matrices involved in the model.  It should be
noticed that non-stationary models ($\lambda_{max}= 1.05$) have
ill-conditioned regressor matrices $\bm{X}_0$. These six experiments
combines different ill-conditioning on $\bm{X}_0$ and/or $\bm\Omega_0$
with different sparsity levels of $\bm{B}$. Moreover, the condition
number of the normal system can become extremely large, leading to a
stalling PCG-NE's convergence. On the other side, even though the
augmented system matrix can become highly ill-conditioned, that
degeneracy is cured by the trivial choices for the preconditioner.  In
Table \ref{tab:varStats}, $\bm{K}_1$ and $\bm{K}_2$ refer,
respectively, to the choices $\bm{D}_1 = \alpha \bm{I}$ with
$\alpha = \max_i \bm\Sigma_{ii}$ and $\bm{D}_2 = \diag(\bm\Sigma)$.
\begin{table}[bhtp]
  \centering

\footnotesize
\begin{tabular}{cccccHHcccc}
  \hline
  Model 
  & Sparsity 
  & $\lambda_{max}$ 
  &  \multicolumn{8}{c}{Condition number of}
  \\
  nr. & factor && $\bm{X}_0$  
  & $\bm\Omega_0$
  & $\bm\Omega_0^{-1} \otimes \bm{Z}_0^T \bm{Z}_0$ 
  & $\bm{X}$ 
  & $\bm{X}^T\bm\Sigma^{-1}\bm{X}$ 
  & $\bm{G}$
  & $\bm{K_1G}$
  & $\bm{K_2G}$
    \\
  \hline
  1 &    26\% &  0.90 & 8.87e+00 & 4.48e+00 & 3.53e+02 & 6.49e+00 & 5.16e+01 & 9.69e+03 & 4.48e+00 & 5.15e+00  \\
  2 &    26\% &  1.05 & 1.64e+02 & 4.48e+00 & 1.21e+05 & 1.13e+02 & 1.41e+04 & 1.78e+05 & 4.48e+00 & 5.28e+00  \\
  3 &    26\% &  0.90 & 1.39e+01 & 4.48e+02 & 8.64e+04 & 6.87e+00 & 3.25e+03 & 8.75e+05 & 4.48e+02 & 5.11e+02  \\
  4 &    26\% &  1.05 & 2.07e+02 & 4.48e+02 & 1.93e+07 & 1.14e+02 & 6.81e+05 & 1.59e+07 & 4.48e+02 & 5.19e+02  \\ 
  5 &    80\% &  0.90 & 1.33e+01 & 4.48e+02 & 7.95e+04 & 1.22e+01 & 4.50e+04 & 4.33e+04 & 1.81e+01 & 1.64e+01  \\
  6 &    80\% &  1.05 & 2.54e+04 & 4.48e+02 & 2.90e+11 & 2.33e+04 & 1.39e+11 & 1.20e+08 & 2.21e+01 & 2.02e+01  \\
  \hline
\end{tabular}
\medskip

\caption{ %
  Statistics for the six VAR models with parameter
  restrictions. 
}
  \label{tab:varStats}
\end{table}

Figures \ref{fig:var} and \ref{fig:varCont} show the convergence of
the PCG-NE, of the PCG-Aug (with preconditioners $\bm{K}_1$ and
$\bm{K}_2$) and of the MVRGLM (with preconditioners $\bm{K}_1$ and
$\bm{K}_2$) methods. In these figure the norm of the normal equation
residuals $\bm{X}^T\bm\Sigma^{-1}( \bm{X}\hat{\bm\beta} - \bm{y})$ is
plotted against the iteration number or the execution time.
These experiments show that, the PCG-NE method has good performances
only for well conditioned problems. On the contrary, PCG-Aug and
MVRGLM methods are more robust and do not have a remarkable loss of
performances when either the regressor or covariance matrices becomes
ill-conditioned. Note that, even though the proposed methods are
implemented in Matlab, the execution time required to compute the
exact solution is comparable if not much better than that of the
Matlab solver.

These experimental results also show that, for this application, there
is not any gain in choosing a diagonal matrix $\bm{D}$ over
a properly scaled identity matrix.

\begin{figure}[p]
  \centering
  \small

  \subfigure[VAR model 1]{
    \includegraphics*[scale=.4,trim=4 0 20 0]{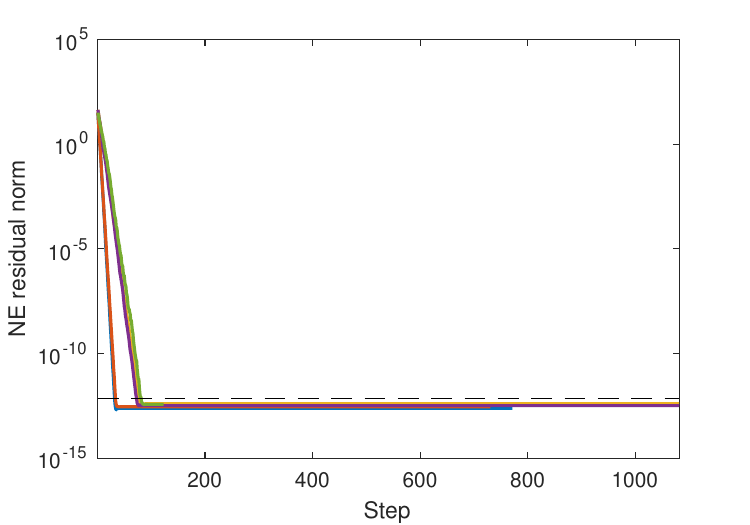}
    \quad
    \includegraphics*[scale=.4,trim=4 0 20 0]{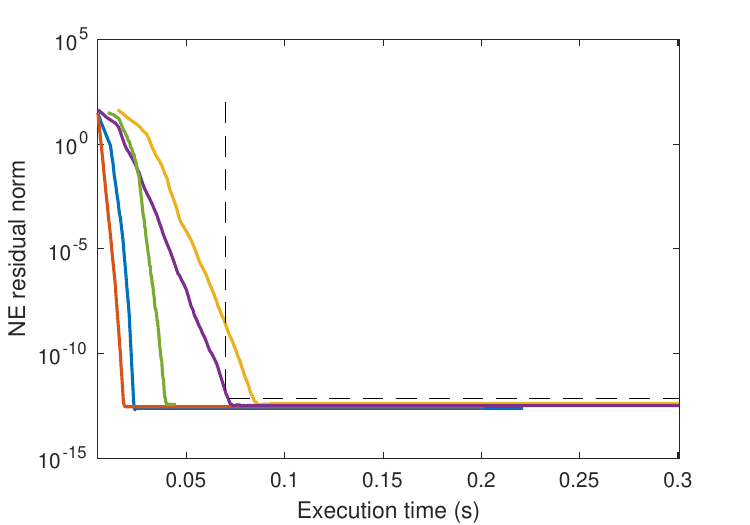}
  }
  
  \subfigure[VAR model 2]{
    \includegraphics*[scale=.4,trim=4 0 20 0]{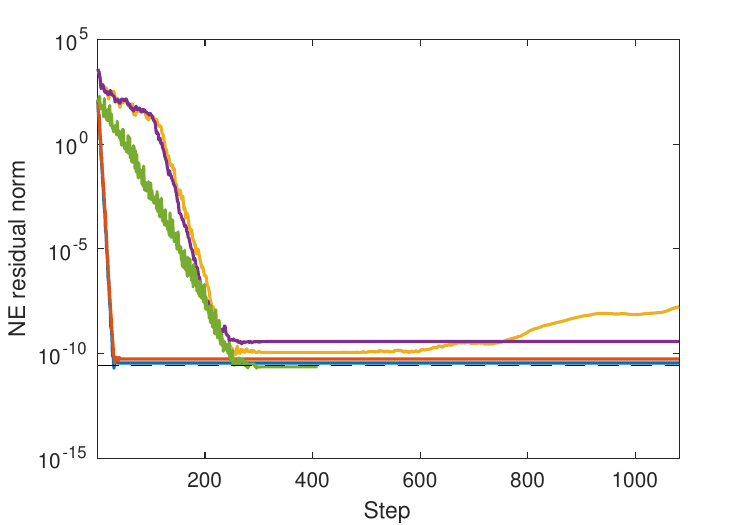}
    \quad
    \includegraphics*[scale=.4,trim=4 0 20 0]{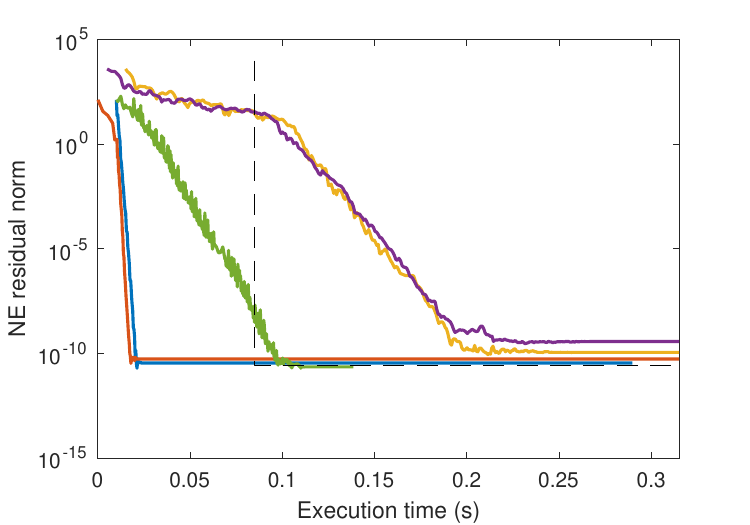}
  }

  \subfigure[VAR model 3]{
    \includegraphics*[scale=.4,trim=4 0 20 0]{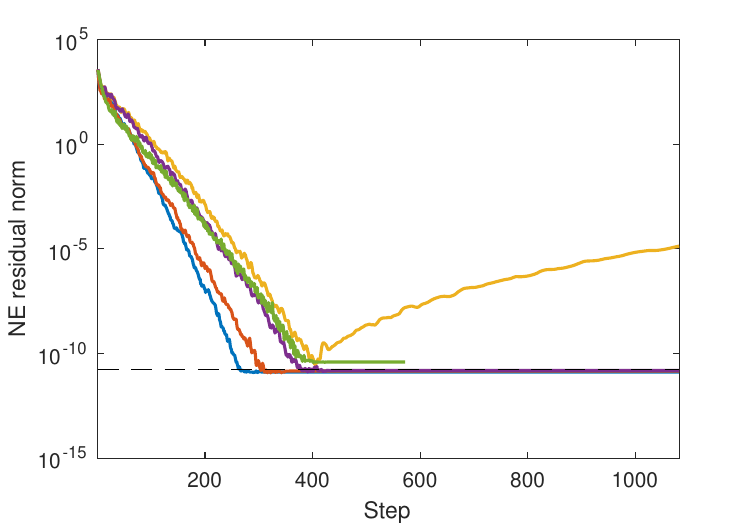}
    \quad
    \includegraphics*[scale=.4,trim=4 0 20 0]{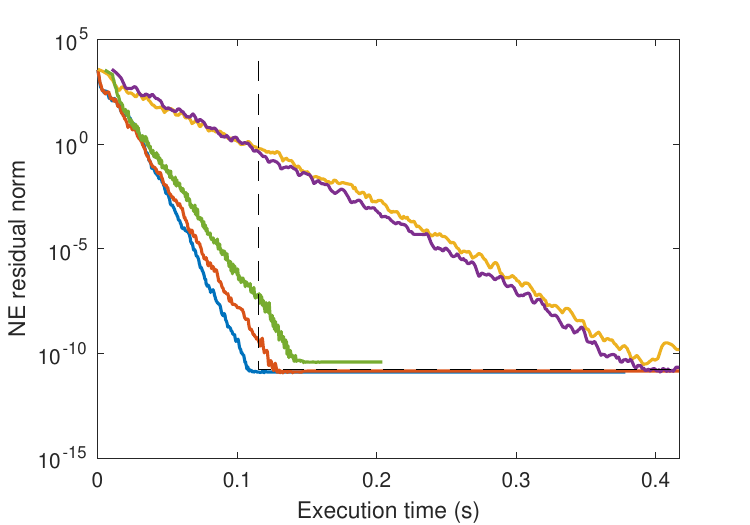}
  }

  \caption{\small 
    Convergence of the PCG-NE (green), PCG-Aug method with
    $\bm{D} = \alpha\bm{I}$ and $\bm{D} = \diag(\bm{\Sigma})$ (blue and red)
    and of the MVRGLM method with $\bm{D} = \alpha\bm{I}$ and
    $\bm{D} = \diag(\bm{\Sigma})$ (yellow and purple) for estimating
    the restricted VAR models 1-4.  
    The execution time and precision of Matlab solver for the
    augmented system is shown as dashed black vertical and horizontal
    lines.  }
  \label{fig:var}
\end{figure}

\begin{figure}[p]
  \centering
  \small

  \subfigure[VAR model 4]{
    \includegraphics*[scale=.4,trim=4 0 20 0]{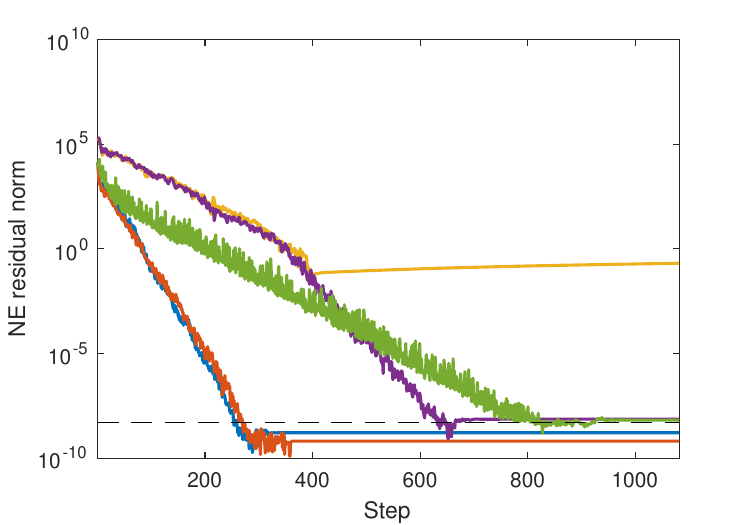}
    \quad
    \includegraphics*[scale=.4,trim=4 0 20 0]{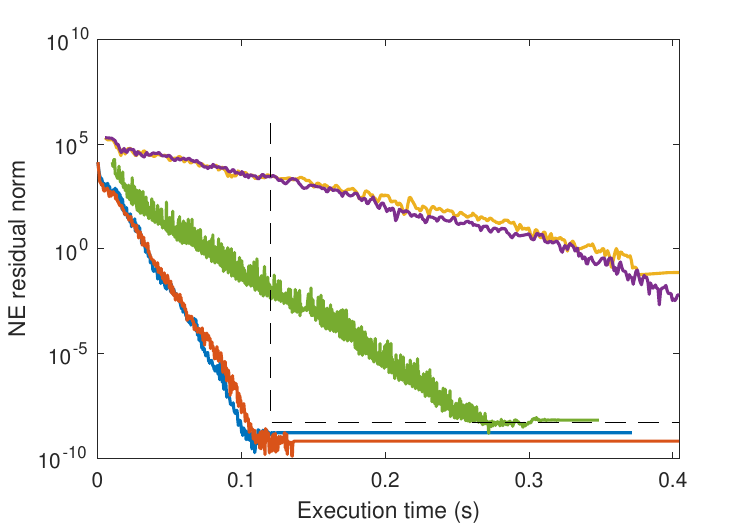}
  }

  \subfigure[VAR model 5]{
    \includegraphics*[scale=.4,trim=4 0 20 0]{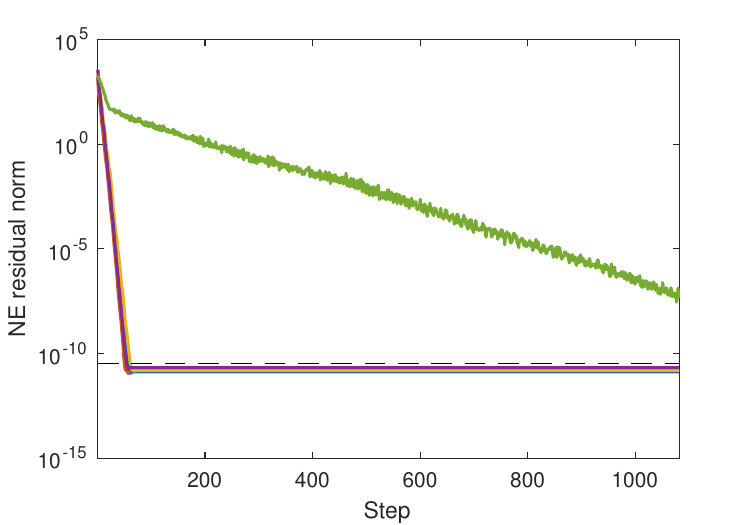}
    \quad
    \includegraphics*[scale=.4,trim=4 0 20 0]{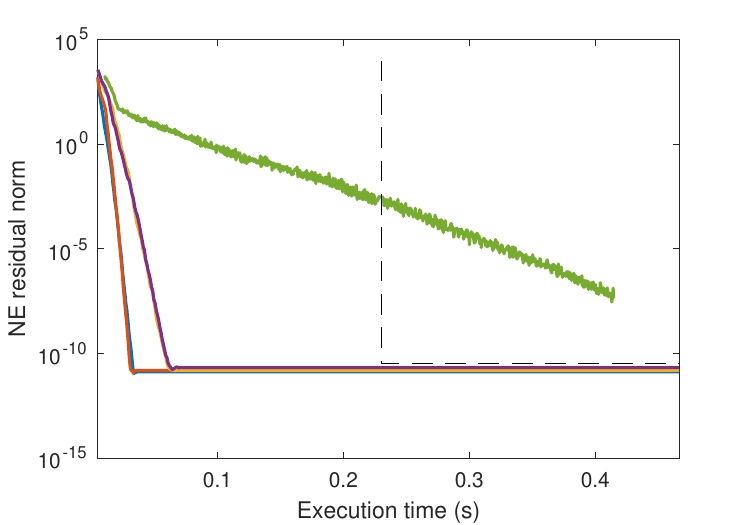}
  }

  \subfigure[VAR model 6]{
    \includegraphics*[scale=.4,trim=4 0 20 0]{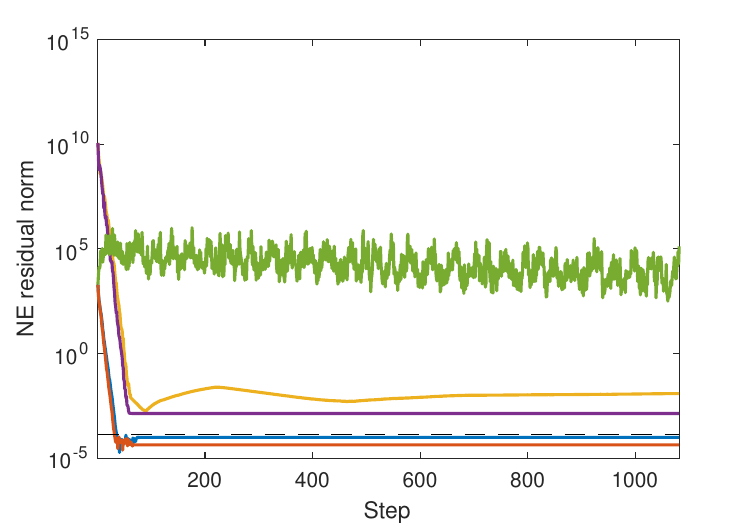}
    \quad
    \includegraphics*[scale=.4,trim=4 0 20 0]{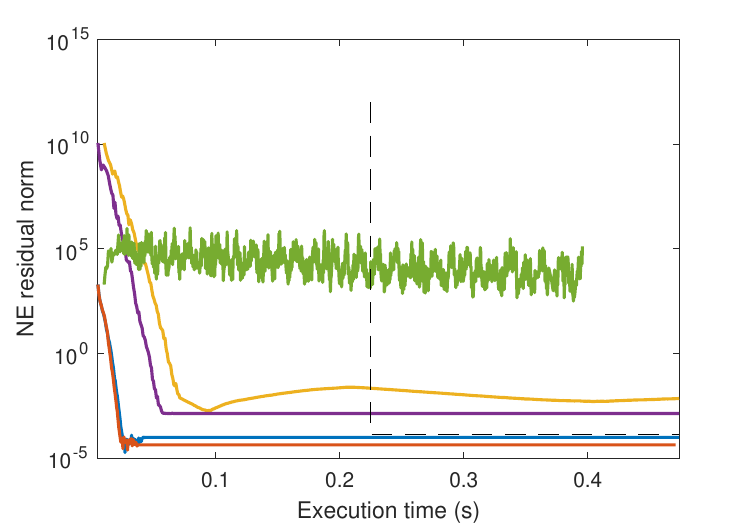}
  }

  \caption{\small 
    (Cont. from Figure \ref{fig:var}).
    Convergence of the PCG-NE, PCG-Aug and MVRGLM methods for estimating
    the restricted VAR models 5-6.  
  }
  \label{fig:varCont}
\end{figure}

\section{Conclusions}

The solution by means of the PCGs method of the augmented system
formulation for the GLS estimator has been considered.  The indefinite
preconditioner, originally proposed in \cite{LuksanVlcek:98} for
solving constrained quadratic programming problems, is used. The
resulting method, uses OLS estimations to iteratively updates an
estimator which, in exact precision, after a finite number of
iteration will provide the GLS estimator. The method is particularly
advantageous when that OLS estimation can be computed in efficient
way.

Moreover, contrary to normal equation based estimators, this approach
does not require a non-singular covariance matrix. The requirements
are those of a well specified GLM: the covariance matrix need to be
positive definite on the null space of the regressor matrix.

Some inferential properties of this methods are considered.  In
particular, contrary to what happen for the PCG method applied to the
normal equations (PCG-NE), the intermediate iterates of the PCG-Aug
method provide unbiased estimators for the parameters.  Both a
mathematical proof and an Monte Carlo experiment to test this property
are provided.  In the simulations the PCG-Aug method showed also
better performances both on average and on the 5\% and 1\% worst
cases.
Notice that, modulo some normalisation, the PCG-NE method is nothing
else than what the statistical data analysis literature call Partial
Least Squares regression method \cite{Elden:2004}.

The PCG-Aug method have been applied to some statistical problems that
can be written as specific structured GLMs. More precisely univariate
and multivariate GLMs with linear restrictions on the parameters that
can be written as a GLMs with singular covariance matrices.  The
Seemingly Unrelated Regressions model have been also considered. That
model can be written as a GLM where the regressor matrix and the
covariance matrices have, respectively, block diagonal and Kronecker
product structures. In the latter two models, OLS estimation is
computationally much cheaper than GLS estimation
\cite{Srivastava:book}. This allows the PCG-Aug methods to have very
good performances.
Numerical experiments have been performed to confirm this claim.  For
those problems the PCG-Aug methods have shown to be faster than and as
numerical precise as direct methods (the Matlab solver).
It should be remarked that the PCG-Aug approach combines the
advantages of direct methods with those of iterative methods. In the
OLS step the structure of the model is exploited as much as possible,
while between the iterations the un-exploitable structure is taken
into account.
Notice that here, contrary to what is usually found in the numerical
linear algebra literature, the use of a PCG method is motivated by the
structure of the problem and not by its sparsity. For instance, the
diagonal blocks in the SUR and VAR models are full matrices.

The same approach can be applied to different structured linear
statistical problems. For instance, in panel data the covariance
matrix is a small rank update of an identity or of a diagonal
matrix. In those applications adding more structure to the models,
like introducing autoregressive dynamics or heteroskedasticity, does
not allow for computationally efficient factorisation methods
\cite{Baltagi:book}. Other possible applications can be found in
spatial data analysis where the regressor matrix and or the covariance
matrix have often a block diagonal, a Kronecker product or some other
sparse structure.

\bibliographystyle{amsplain}
\providecommand{\bysame}{\leavevmode\hbox to3em{\hrulefill}\thinspace}
\providecommand{\MR}{\relax\ifhmode\unskip\space\fi MR }
\providecommand{\MRhref}[2]{%
  \href{http://www.ams.org/mathscinet-getitem?mr=#1}{#2}
}
\providecommand{\href}[2]{#2}


\clearpage
\appendix
\section{Proofs}\label{sec:proofs}

\begin{proof}[Proof of Lemma \ref{thm:AugBLUE}]
  By applying the orthogonal transformation $\bm{Q}$ to the first block of
  rows and columns of $\bm{G}$, the augmented system \eqref{eq:aug1} can be
  rewritten in the following form
  \begin{align}\label{eq:rotatedAug}
    \pmx{
      \bm\Sigma_{RR} & \bm\Sigma_{RN} & \bm{R} \\
      \bm\Sigma_{NR} & \bm\Sigma_{NN} & \bm{0} \\
      \bm{R}^T & \bm{0} & \bm{0}
    }
    \pmx{ \bm{w}_R \\ \bm{w}_N \\ \bm{b}_{Aug} }
    =
    \pmx{ \bm{y}_R \\ \bm{y}_N \\ \bm{0} }
  \end{align}
  where
  $\bm\Sigma_{ij} = \bm{Q}_i^T \bm\Sigma \bm{Q}_j$,
  $\bm{w}_{i} = \bm{Q}_i^T \bm{w}$
  and $\bm{y}_i = \bm{Q}_i^T \bm{y}$,
  for $i,j \in \{R,N\}$.
  As $\bm\Sigma_{NN}$ is positive definite, the solution to
  \eqref{eq:rotatedAug} is given by
  \begin{align*}
    \bm{w}_R &= \bm{0},
    &
    \bm{w}_N &= \bm\Sigma_{NN}^{-1} \bm{y}_N
    &\text{and}&&
    \bm{b}_{Aug} &= \bm{R}^{-1}( \bm{y}_R - \bm{\Sigma}_{RN}\bm\Sigma_{NN}^{-1}\bm{y}_N).
  \end{align*}
  It follows that the solution to \eqref{eq:aug1} is given by \eqref{eq:augsol}.

  In order to show that $\bm{b}_{Aug}$ is the BLUE for $\bm\beta$,
  note that $\bm{P}_N \bm{X}= \bm{X}$. Then, the estimator
  $\bm{b}_{Aug}$ can be rewritten as
  $\bm{b}_{Aug} = \bm{b} + \bm{R}^{-1}\bm{Q}_R^T \bm{P}_N \bm\eps$ and
  so $\bm{b}_{Aug}$ is unbiased and its covariance matrix is given by
  \begin{align*}
    \cov(\bm{b}_{Aug}) = \bm{R}^{-1}\bm{Q}_R^T\bm{P}_N \bm\Sigma \bm{Q}_R \bm{R}^{-T},
  \end{align*}
  where the property $\bm{P}_N\bm{\Sigma}\bm{P}_N = \bm{P}_N\bm{\Sigma}$ has been used.
  Next, to prove the optimality of this estimator, consider an
  alternative linear unbiased estimator
  $\tilde{\bm{b}} = \bm{A}^T \bm{y}$. Being $\tilde{\bm{b}}$ unbiased,
  it is necessary that $\bm{A}^T\bm{X}=\bm{I}$.  This is equivalent to
  require that $A$ is given by
  $\bm{A} = \bm{Q}_R\bm{R}^{-T} + \bm{Q}_N \bm{A}_N$ for some
  $\bm{A}_N \in \Real^{(m-n) \times n}$. Next, since the covariance of
  $\tilde{\bm{b}}$ is given by
  \begin{align*}
    \cov(\tilde{\bm{b}}) = \cov(\tilde{\bm{b}}-\bm{b}_{Aug}) + \cov(\bm{b}_{Aug})
    + \cov(\bm{b}_{Aug}, \tilde{\bm{b}}-\bm{b}_{Aug})
    + \cov(\tilde{\bm{b}}-\bm{b}_{Aug}, \bm{b}_{Aug}).
  \end{align*}
  and  $\tilde{\bm{b}} - \bm{b}_{Aug} = \bm{A}^T \bm{\eps} -
  \bm{R}^{-1}\bm{Q}_R^T \bm{P}_N \bm{\eps}$, 
  then
  \begin{align*}
    \cov(\bm{b}_{Aug}, \tilde{\bm{b}}-\bm{b}_{Aug})
    &=
    \bm{R}^{-1} \bm{Q}_R^T \bm{P}_N \bm{\Sigma} (\bm{A} - \bm{P}_N\bm{Q}_R\bm{R}^{-T})
    \\
    &=
    \bm{R}^{-1} \bm{Q}_R^T( 
    \bm{P}_N\bm\Sigma\bm{Q}_R\bm{R}^{-T} 
    + \bm{P}_N\bm{\Sigma}\bm{Q}_N\bm{A}_N 
    - \bm{P}_N\bm\Sigma\bm{Q}_R \bm{R}^{-T} 
    )
    \\
    &=
    \bm{R}^{-1} \bm{Q}_R^T \bm{P}_N \bm\Sigma \bm{Q}_N \bm{A}_N
    = 0,
  \end{align*}
  where it has been used the property $\bm{P}_N \bm\Sigma \bm{Q}_N = \bm{0}$. 
  This proves that $\cov(\tilde{\bm{b}}) - \cov(\bm{b}_{Aug})$.
\end{proof}

\end{document}